\documentclass{amsart}
\usepackage{amsfonts}
\usepackage{amssymb}
\usepackage[mathscr]{eucal} 
\usepackage{times}
\usepackage{verbatim} 

\newtheorem{prop}{\bf Proposition}[section]

\newtheorem{theo}[prop]{\bf Theorem}
\def\A{\mathscr{A}}
\def\B{\mathscr{B}}
\def\F{\mathbb{F}}
\def\L{\mathcal{L}}
\def\Z{\mathbb{Z}}
\def\X{\mathfrak{X}}
\newcommand{\ii}{\textbf{i}}
\def\C{\mathscr{C}}
\def\aut{\mathop{{\rm Aut}}}
\def\supp{\mathop{\hbox{\rm  Supp}}}
\def\deg{\mathop{\hbox{\rm  deg}}}
\def\Diag{\mathop{\hbox{\rm  Diag}}}
\def\diag{\mathop{\hbox{\rm  diag}}}
\def\stab{\mathop{\hbox{\rm  Stab}}}
\def\norm{\mathop{\hbox{\rm  Norm}}}
\def\tr{\mathop{\hbox{\rm  tr}}}
\def\W{\mathscr{W}}
\def\alb{\mathbb{A}}
\def\T{\mathcal{T}}
\def\J{\mathcal{J}}
\def\der{\mathop{\hbox{\rm Der}}}
\def\slin{\mathfrak{sl}}
\def\glin{\mathfrak{gl}}
\def\sp{\mathfrak{sp}}
\def\g{\mathfrak{g}}
\def\f{\mathfrak{f}}
\def\e{\mathfrak{e}}

\def\Ad{\mathop{\rm Ad}}
\def\ad{\mathop{\rm ad}}
\def\End{\mathop{\rm End}}
\def\Cent{\mathop{\rm Cent}}
\def\Mat{\mathop{\rm Mat}}

\def\id{\mathop{\rm id}}

\def\SL{\mathop{\rm SL}}
\def\PSL{\mathop{\rm PSL}}
\def\GL{\mathop{\rm GL}}

\def\Sp{\mathop{\rm SP}}

\def\span#1{\langle #1 \rangle}

\def\fix{\mathop{\rm fix}}

\title{Weyl groups of the fine gradings    on $\e_6$}

\author[D. Aranda]{Diego Aranda Orna}
\email{daranda@unizar.es}
\address{Dpto. de Matem\'aticas\\ Facultad de Ciencias\\
Universidad de Zaragoza, 50009, Zaragoza, Spain}
\thanks{\ The first author is partially supported by the grant
MTM 2010-18370-C04-02 and by the Diputaci\'on
General de Arag\'on and Fondo Social Europeo.
The second author is partially supported by the
MCYT grant MTM 2010-15223 and by the Junta de Andaluc\'{\i}a grants FQM-336, FQM-1215
and FQM-2467.
The third author is partially supported by   the Ministero dell'Istruzione, dell'Universit\`{a} e della
Ricerca (MIUR) with a grant for the Ph.D. at the Universit\`{a} del Salento}

\author[C. Draper]{Cristina Draper Fontanals}
\email{cdf@uma.es}
\address{Dpto. de Matem\'atica Aplicada\\Escuela de las Ingenier\'{\i}as, Universidad de M\'alaga\\
Ampliaci\'{o}n Campus de Teatinos, 29071, M\'alaga, Spain}

\author[V. Guido]{Valerio Guido}
\email{valerio.guido@unisalento.it}
\address{Dpto. di Matematica E. De Giorgi\\ Universit\`{a} del Salento\\
Via Provinciale Lecce-Arnesano, 73100, Lecce, Italy}


\begin{document}

\maketitle

\begin{abstract}
The Weyl groups of the fine gradings with infinite universal grading group on $\mathfrak{e}_6$ are given.
\end{abstract}


\noindent {\bf Keywords:}    exceptional Lie algebra of type
$E_6$, Weyl group, fine grading, model.

\noindent {\bf MSC2010 Subject Classification:} 17B25, 
17B70. 


\section{Introduction}


If $\mathcal{G}$ denotes a connected algebraic group, defined over an algebraically closed field $\F$, and $T$ is a maximal torus of  $\mathcal{G}$, because of the rigidity of tori, the quotient $N_{\mathcal{G}}(T)/C_{\mathcal{G}}(T)$ is a finite group, called the Weyl group of $\mathcal{G} $ (since all the maximal tori are conjugate, all their Weyl groups are isomorphic). Here $N_{\mathcal{G}}(T)$ is the normalizer and $C_{\mathcal{G}}(T)$ is the centralizer of $T$ in $\mathcal{G}$.   
In case  $\mathcal{G}$  is a reductive group, this Weyl group is isomorphic to the  Weyl group of the root system.  Recall that a Weyl group of a root system $\Phi$ is the subgroup of the isometry group of $\Phi$ generated by reflections through the hyperplanes orthogonal to the roots, and as such is a finite reflection group. Abstractly, Weyl groups are finite Coxeter groups, and are important examples of these.
The Weyl group of a semisimple finite-dimensional Lie algebra is the Weyl group of its root system.
There is no need to emphasize the importance of Weyl groups in the
theory of Lie algebras, and their
important role in representation theory. Thus, a lot of generalizations of this concept have arisen, among them the Weyl group of a Lie grading.


This notion was introduced by Patera and Zassenhaus in  \cite{LGI}, trying to generalize the groups related with the Cartan decomposition of a  complex simple Lie algebra. That work can be considered as the beginning of the theory of the gradings on Lie algebras. Until that moment, the gradings usually were considered over cyclic groups, with the obvious exception of the Cartan decomposition (root decomposition). They proposed the search of the remaining fine gradings as a way of looking for an alternative approach to the structure theory and
representation theory. Moreover, these fine gradings  provide immediately basis with interesting properties, as proved in \cite[Proposition 10]{f4}. In a particular case,   \cite[Lemma~1]{e6} shows that every element in any basis of homogeneous elements of a fine grading by a finite group is 
directly semisimple.

The task of computing the Weyl group of one fine grading on a concrete Lie algebra was done in \cite{checosWeyl}, for the Lie algebra $\slin(p^2,\mathbb{C})$, $p$ being a prime, and for the fine grading given as a tensor product of the Pauli matrices
of the same order $p$. In authors' words, the study of the normalizer of the MAD-group corresponding to a fine grading offers the most important tool for describing symmetries in the system of nonlinear equations
connected with a contraction of the Lie algebra. This technique is applied in \cite{normPauli} for obtaining all the graded contractions
of the Lie algebra $\slin_3(\mathbb{C})$ arising from the Pauli grading.
Afterwards, the same job has been boarded in \cite{EK11} and \cite{EK12} for a variety of algebras: matrix algebras, octonion algebras, Albert algebras, and Lie algebras of types $A$, $B$, $C$ and $D$.
As  $\frak{g}_2$
is the derivation algebra of the octonion algebra, and $\frak{f}_4$
is the derivation algebra of the Albert algebra, our goal will be to determine  these Weyl groups for $\e_6$, the following simple Lie algebra in size, and the only one of the series-E for which the fine gradings are completely classified up to equivalence (\cite{e6}).

Thus, we wish to investigate the symmetries of  the splittings on $\e_6$.
This algebra   has the striking quantity of $14$ fine gradings   up to equivalence, whose universal groups and types will be brought  in Section~\ref{sec_graduaciones4}. It is not our purpose to study the $7$ fine gradings whose universal group is  finite, due to the fact that the work  \cite{elchino}  has  almost completed the problem of finding the Weyl groups in such cases. Also, the normalizer of the MAD-group of type $\Z_3^4$ is described in \cite{Viruannals} in detail, while determining properties of the elementary $3$-subgroups of $\aut(\e_6)$. Consequently, we will restrict our attention to the infinite cases different from the Cartan grading.

In the case   of the Cartan grading, its Weyl group is of course very well known, as well as its applications to other branches of Mathematics, as finite geometry and incidence structures.
Namely, the Weyl group of $\e_6$ is closely related with the automorphism group of the $27$ lines on the smooth cubic surface. This connection was recognized through the relation with del Pezzo surfaces of degree three (\cite{Coxeter}).
Also \cite{Manivel} discusses about relations between classical configurations of lines and the (complex) Lie algebra $\e_6$.\smallskip


The paper is structured as follows.
Section~2 gives the basic definitions  about group gradings and about several groups related to them, including the notion of  Weyl group of a grading  as a generalization of the classical concept.
In Section~3 we have compiled some algebraic constructions of the Lie algebra  $\e_6$, which will be used in further sections. This list of models begins with the most famous, the unified Tits' construction, but it encloses other ones not explicitly appeared in the literature. Also this section reviews some of the standard facts of the  algebraic structures involved in the mentioned constructions: composition algebras, symmetric composition algebras and the exceptional Jordan algebra, the Albert algebra. Section~4 provides a detailed description of the fine gradings on $\e_6$ with infinite universal groups.
Finally, in the fifth section
 our main results are stated and proved: they consist of the computation of
 the Weyl groups of each of the above gradings.
They are  summarized  
 in Theorem~\ref{th_maintheorem}.\smallskip

In light  of the obtained results, we can think of the following question:

\textbf{Conjecture:} If $Q$ is a maximal abelian diagonalizable group of   the algebraic group
$\mathcal{G}=\aut(L)$, for $L$ a semisimple Lie algebra over an algebraically closed field $\F$ of characteristic zero,
then two elements $q_1,q_2\in Q$ are conjugated in $ \mathcal{G}$ if and only if they are conjugated in the normalizer of $Q$.

This result is well known for the case in which $Q$ is a maximal torus of $ \mathcal{G}$ (see for instance \cite[p.~ 99]{Carter}), that is, in case the diagonalization produced by $Q$ is the Cartan grading on $L$. The proof is not trivial at all and it is related with some deep topics, as the Bruhat decomposition or the BN-pairs.
As far as we know, the question for an arbitrary MAD-group $Q$ is at present far from being solved.
A direct proof of this conjecture would shorten considerably our work.
An affirmative answer for the case $L=\e_6$ is a consequence of the results of this paper, and the cases $\frak{f}_4$ and $\frak{g}_2$ can be concluded from \cite{EK11}. This is unlikely to be a coincidence.


\section{Basics on gradings and their Weyl groups}\label{sec_defsbasicas2}

Let $\A$ be a finite-dimensional algebra (not necessarily associative) over a field $\F$, and $G$ an abelian group.

\defi\rm A \emph{$G$-grading} $\Gamma$ on $\A$ is a vector space decomposition
$$
 \Gamma: \A = \bigoplus_{g\in G} \A_g
 $$
such that
$$
 \A_g \A_h\subseteq \A_{g+h} \quad {\textnormal{ for all }} g,h\in G.
 $$

Fixed such a decomposition, the algebra $\A$ will be called a \emph{$G$-graded algebra}, the subspace $\A_g$ will be referred to as \emph{homogeneous component of degree $g$} and its nonzero elements will be called the \emph{homogeneous elements of degree $g$}. The \emph{support} is the set $\supp \Gamma :=\{g\in G\mid \A_g\neq 0\}$.

\defi\rm If $\Gamma\colon \A=\oplus_{g\in G} \A_g$ and $\Gamma'\colon \A=\oplus_{h\in H} \A_{h}$ are gradings over two abelian groups $G$ and $H$, $\Gamma$ is said to be a \emph{refinement} of $\Gamma'$ (or $\Gamma'$ a \emph{coarsening} of $\Gamma$) if for any  $g\in G$ there is $h\in H$ such that $\A_g\subseteq \A_{h}$. In other words, any homogeneous component of $\Gamma'$ is the direct sum of some homogeneous components of $\Gamma$. A refinement is \emph{proper} if some inclusion $\A_g\subseteq \A_{h}$ is proper. A grading is said to be \emph{fine} if it admits no proper refinement.

\defi\rm Let $\Gamma$ be a $G$-grading on $\A$ and $\Gamma'$ an $H$-grading on $\B$, with supports, respectively, $S$ and $T$. Then $\Gamma$ and $\Gamma'$ are said to be \emph{equivalent} if there is an algebra isomorphism  $\varphi\colon\A \rightarrow \B$ and a bijection $\alpha\colon  S \rightarrow T$ such that $\varphi(\A_s)=\B_{\alpha(s)}$ for all $s\in S$. Any such $\varphi$ is called an \emph{equivalence} of $\Gamma$ and $\Gamma'$.

\smallskip

The study of gradings is based on classifying fine gradings up to equivalence, because any grading is obtained as a coarsening of some fine one.
A  useful invariant of a grading is given by the dimensions of the components: the \emph{type} of a grading $\Gamma$ is the sequence of numbers $(h_1,\ldots, h_r)$ where $h_i$ is the number of homogeneous components of dimension $i$, with $i=1,\ldots, r$ and $h_r\neq 0$. Obviously, $\dim \A = \sum_{i=1}^r ih_i$.

It can always be assumed that $\supp \Gamma$ generates $G$ (replacing $G$ with a smaller group, if necessary), but there are, in general, many other groups $H$ such that the vector space decomposition $\Gamma$ can be realized as an $H$-grading. According to \cite{LGI}, there is one distinguished group among them:

\defi\rm Suppose that $\Gamma$ admits a realization as a $G_0$-grading for some abelian group $G_0$. We will say that $G_0$ is a \emph{universal grading group of $\Gamma$} if for any other realization of $\Gamma$ as a $G$-grading, there exists a unique group homomorphism $G_0 \rightarrow G$ that restricts to the identity on $\supp \Gamma$. We denote this group by $U(\Gamma)$, which   always exists
and besides it is     
unique up to isomorphism   depending only on the equivalence class of $\Gamma$ (see \cite{Koc09}). We will only deal with gradings over their universal grading groups.

\medskip

From now on, consider a finite-dimensional Lie algebra $\L$. The ground field $\F$ will be supposed to be algebraically closed and of characteristic zero throughout this work. With these hypothesis, the automorphism group of $\L$ is an algebraic linear group. Thus, following \cite[\S 3, p.104]{enci}, it is useful to look at gradings focusing on quasitori of the automorphism group $\aut( \L)$, where quasitori are diagonalizable groups, or equivalently, direct products of tori with abelian finite groups.

If $\L=\oplus_{g\in G} \L_g$ is a $G$-grading, the map $\psi\colon  \X(G):=\text{hom} (G,\F^\times) \rightarrow
\aut(\L)$  applying each $\alpha \in \X(G)$ to the automorphism $\psi_{\alpha}\colon \L \rightarrow \L$ given by $\L_g\ni x \mapsto \psi_{\alpha}(x) :=\alpha (g) x$ is a group homomorphism. Since $G$ is finitely generated, $\psi(\X(G))$ is an algebraic quasitorus. And conversely, if $Q$ is a quasitorus and $\psi\colon  Q \rightarrow \aut (\L)$ is a homomorphism, $\psi(Q)$ consists of semisimple automorphisms and we have a $\X(Q)$-grading $\L=\oplus_{g\in\X(Q)} \L_g$ given by $\L_g=\{x\in \L\mid \psi(q)(x)=g(q)x \, \forall q \in Q\}$, with $\X(Q)$ a finitely generated abelian group.

In case $Q$ is a torus $T$ of the automorphism group of $\L$, this means that there exists a group isomorphism $(\F^\times)^s\to T$,
$(\alpha_1,\dots,\alpha_s)\mapsto t_{\alpha_1,\dots,\alpha_s}$, thus $\Z^s\cong\X(T)$ by means of $(n_1,\dots,n_s)\mapsto \psi_{n_1,\dots,n_s}\colon T\to \F^\times$, with
$\psi_{n_1,\dots,n_s}(t_{\alpha_1,\dots,\alpha_s})=\alpha_1^{n_1}\dots\alpha_s^{n_s}$, and the $\Z^s$-grading produced on $\L$ is given by
\begin{equation}\label{eq_torosconisomorfismoprefijado}
\L_{(n_1,\dots,n_s)}=\{x\in\L\mid t_{\alpha_1,\dots,\alpha_s}(x)=\alpha_1^{n_1}\dots\alpha_s^{n_s}x\quad\forall \alpha_i\in\F^\times,\forall i=1,\dots,s\}.
\end{equation}
A grading is \emph{toral} if there exists a torus $T$ of the automorphism group of $\L$ containing the quasitorus that {produces} the grading. This is equivalent to the grading is a coarsening of the root decomposition of a semisimple Lie algebra $\L$ relative to some Cartan subalgebra.  \smallskip

Following \cite{LGI}, we can consider three distinguished subgroups of $\aut (\L)$ associated to a $G$-grading $\Gamma\colon  \L= \oplus_{g\in G} \L_g$.

\defi\rm The subgroup of $\aut(\L)$ of all automorphisms of $\L$ that permute the components of $\Gamma$ is called the \emph{automorphism group of $\Gamma$} and  denoted by $\aut( \Gamma)$. Each $\varphi \in \aut (\Gamma)$ determines a self-bijection $ \alpha_{\varphi}\colon S\to S$ of the support $S=\supp(\Gamma)$ such that $\varphi(\L_s)=\L_{\alpha_{\varphi}(s)}$ for all $s\in S$.

\defi\rm The \emph{stabilizer of $\Gamma$}, denoted by $\stab (\Gamma)$, is the set of automorphisms of $\L$ that leave invariant the homogeneous components of the grading, that is, the group of automorphisms of $\mathcal{L}$ as graded algebra. It coincides with the kernel of the homomorphism $\aut (\Gamma) \rightarrow \text{Sym} (S)$ given by $\varphi \mapsto \alpha_{\varphi}$.

\defi\rm  The \emph{diagonal group} of the grading,  denoted by $\Diag (\Gamma)$, is the set of automorphisms of $\L$ such that each $\L_g$ is contained in some eigenspace. This group is a quasitorus isomorphic to $\X(U(\Gamma))$, and the related  $\X(\Diag (\Gamma))$-grading (the eigenspace  decomposition of $\mathcal{L}$ relative to
 $\Diag (\Gamma)$) is equivalent to $\Gamma$, under our assumptions on the field.
 Moreover, the group $\stab (\Gamma)$ is the centralizer of $\Diag (\Gamma)$, and $\aut (\Gamma)$ is its normalizer.
\smallskip

The grading $\Gamma$ is fine if and only if $\Diag(\Gamma)$ is a Maximal Abelian Diagonalizable group,  usually called a \emph{MAD-group}. Moreover the number of conjugacy classes of MAD-groups in $\aut(\L)$ agrees with the number of equivalence classes of fine gradings on $\L$.

\defi\rm
The \emph{Weyl group} of $\Gamma$, denoted by $\W(\Gamma)$, is defined as the quotient group $\aut(\Gamma)/ \stab(\Gamma)$, which coincides with $\text{Norm}\, (\Diag (\Gamma))/ \Cent(\Diag (\Gamma))$ (normalizer and centralizer). It can be identified with a subgroup of the symmetric group $\text{Sym}(S)$, but also with a subgroup of $\aut(U(\Gamma))$, since the bijection $\alpha_{\varphi}\colon S\to S$ can be extended to an automorphism of the universal grading group.
\smallskip

The term ``Weyl group'' is suitable, because  if $\Gamma$ is the Cartan grading on a semisimple
Lie algebra $\L$, which is produced by a maximal torus $T$ of $\aut(\L)$, then $\W(\Gamma)\cong \text{Norm}\, (T)/ T$ is isomorphic to the so-called \emph{extended Weyl group} of $\L$, i.e., the automorphism group of the root system of $\L$.


\section{Models}\label{sec_modelos3}

Here we review some constructions of the exceptional simple Lie algebra of type $E_6$, and add some other models of it which will allow us to have several different descriptions of equivalent gradings. They will be used in the last section to compute the symmetries of such gradings, changing the viewpoint when necessary.
Recall that we are assuming that $\F$ is an algebraically closed field of characteristic zero throughout the work, in spite of that most of the next constructions can be considered in more general settings.

\subsection{Involved structures}\label{subsec_involvedstructures}

Recall   that a \emph{composition algebra} is an $\F$-algebra $C$ endowed with a nondegenerate multiplicative quadratic form (the \emph{norm}) $n\colon  C \rightarrow \F$, that is,
$n(xy)=n(x)n(y)$ for all $x,y\in C$.
 Denote the associated bilinear form (the \emph{polar form})  by
$n(x,y):=n(x+y)-n(x)-n(y)$. 

The unital composition algebras are called \emph{Hurwitz algebras}. Each Hurwitz algebra satisfies a quadratic equation
$$
x^2 - t(x)x + n(x)1=0,
$$
where the linear map $t(x):=n(x,1)$ is called the \emph{trace} form. Besides it has a \emph{standard involution}   defined by
$\bar{x}:=t(x)1-x$. The subspace of  trace zero elements will be denoted by $C_0$.

Composition algebras can only have dimensions $1$, $2$, $4$ and $8$.
Over our algebraically closed field $\F$, the only composition algebras are,
up to isomorphism, the ground field $\F$, the cartesian product of two copies of the
ground field $\F\oplus \F$ (with the norm of $(a,b)$ given by the product $ab$), the matrix
algebra   $\Mat_{2\times2}(\F)$ (where the norm of
a matrix is its determinant), and the split \emph{octonion algebra}. (See, for instance,
\cite[Chapter 2]{libroRussi}, for all this material.)  In the following we will denote this  {octonion algebra} or \emph{Cayley algebra}
by $\C$.
 There is a \emph{canonical basis} $\{e_1, e_2, u_1, u_2, u_3, v_1, v_2, v_3\}$ of $\C$ consisting of isotropic elements, where $e_1$ and $e_2$ are idempotents, and such that $n(e_1,e_2)=n(u_i,v_i)=1$ and $n(e_k,u_i)=n(e_k,v_i)=n(u_i,u_j)=n(u_i,v_j)=n(v_i,v_j)=0$ for any $k=1,2$ and $1\leq i\neq j \leq 3$.
This basis is associated to a fine $\Z^2$-grading, called the \emph{Cartan grading} on $\C$, which is given by
\begin{equation}\label{eq_graddeCdeCartan}
\begin{array}{rll}
&\C_{(0,0)}  =  \mathbb{F}e_1 \oplus \mathbb{F}e_2,&\\
&\C_{(1,0)}  =   \mathbb{F}u_1, \qquad&  \C_{(-1,0)} = \mathbb{F}v_1,\\
&\C_{(0,1)}  =   \mathbb{F}u_2, &  \C_{(0,-1)} = \mathbb{F}v_2,\\
&\C_{(-1,-1)}  =   \mathbb{F}u_3, \qquad&  \C_{(1,1)} = \mathbb{F}v_3.
\end{array}
\end{equation}

Another grading on the octonion algebra arises quite naturally.
If we take any three orthogonal elements $w_1, w_2, w_3 \in \C$ such that $w_i^2=1$ for all $i$ in $\{1,2,3\}$, then $\C$ is spanned by
$ \{ 1, w_1, w_2, w_3, w_1w_2, w_2w_3, w_3w_1, w_1w_2w_3 \}, $
and this produces a fine $\Z_2^3$-grading on $\C$ by means of
\begin{equation}\label{eq_gradZ2cubodeC}
\deg(w_1) = (\bar1,\bar0,\bar0),\,\deg(w_2) = (\bar0,\bar1,\bar0),\,\deg(w_3) = (\bar0,\bar0,\bar1).
\end{equation}
Moreover the restriction to the $4$-dimensional composition algebra $\langle{1, w_1, w_2,   w_1w_2}\rangle$ is a $\Z_2^2$-grading and the restriction to the two-dimensional composition algebra $\langle{1, w_1}\rangle$ is a $\Z_2$-grading (the grading provided by the Cayley-Dickson doubling process applied to the field).

\smallskip

The algebra of derivations of $\C$ is a simple Lie algebra of type $G_2$, which   will be denoted by $\g_2$.
For any $a,b \in \C$, the endomorphism
$$ d_{a,b}=[l_a, l_b]+[l_a, r_b]+[r_a, r_b] $$
is a derivation of $\C$, 
where $l_a(b)=ab=r_b(a)$ for any $a,b\in \C$.
The linear span $d_{\C,\C}=d_{\C_0,\C_0}=\{\sum_id_{x_i,y_i}\mid x_i,y_i\in\C_0,i\in \Z\}$ of these derivations coincides with the whole algebra $\der(\C)$ (\cite{Schafer}).
There is an algebraic group isomorphism $\aut(\C) \rightarrow \aut(\g_2)$ mapping each $\varphi\in \aut(\C)$ into the automorphism $\Ad(\varphi)$ defined by   $\Ad(\varphi)(d)=  \varphi d \varphi^{-1}$ for any $d\in \der(\C)$.
Therefore, $G$-gradings on $\C$ can be extended to $G$-gradings on $\der(\C)$.

\smallskip

Within our work we will often use a special class of composition algebras: the symmetric composition algebras (see \cite{ElduqueCompo1}).
\defi\rm A composition algebra $(S,\ast,n)$ (where $\ast$ denotes the product in $S$ and $n$ the norm) is said to be $\emph{symmetric}$ if the polar form of its norm is associative, that is:
$$ n(x\ast y,z)= n(x, y\ast z) $$
for any $x,y,z\in S$. \smallskip

If $(C,\cdot,n)$ is a Hurwitz algebra, then $pC=(C,*,n)$ with the new product $x*y:=\bar x\cdot\bar y$ is called a \emph{para-Hurwitz algebra}, which is an important example of symmetric composition algebra. Note that the unit of $(C,\cdot,n)$ becomes
a paraunit in $pC$, that is, an element $e$ such that $e * x = x * e = n(e, x)e - x$.
Within our setting, the only  symmetric composition algebras are one para-Hurwitz algebra of each dimension $1$, $2$, $4$ and $8$, and the pseudooctonion algebra, also of dimension $8$. This is defined as $P_8(\F)=(\Mat_{3\times3}(\F)_0,*,n)$, for the new product
$$
x * y = \omega xy - \omega^2 yx -\frac{\omega-\omega^2}3\tr(xy)I_3,
$$
with $\omega\in \F$ a primitive cubic root of the unit and the norm given by $n(x)=\frac16\tr(x^2)$.
Throughout the text, $I_n$ will denote the identity matrix of size $n$.

The gradings on  symmetric composition algebras are described and classified in \cite[Theorem~4.5]{Eldgradsencomposimetricas}.
Following that work,
every group grading on  a Hurwitz algebra $(C,\cdot,n)$  is a grading on $pC$, and the gradings coincide when  $C$ has dimension at least $4$.
There is a remarkable $\mathbb{Z}_3$-grading in the case of dimension $2$ which does not come from a grading on the corresponding Hurwitz algebra, namely,
$$
pC_{\bar{0}}=0,\qquad pC_{\bar{1}}=\F e_1\qquad  pC_{\bar{2}}=\F e_2,
$$
where $e_1$ and $e_2$ are the orthogonal idempotents $(1,0)$ and $(0,1)$ (idempotents for the product $\cdot$), which verify
$e_1*e_1=e_2$ and $e_2*e_2=e_1$.
Also, a natural  $\mathbb{Z}_3^2$-grading appears on the pseudooctonion algebra,    determined by
$$
P_8(\F)_{(\bar1,\bar0)}=\F \left( \begin{array}{ccc}
1 & 0 & 0\\ 0 & \omega & 0\\ 0 & 0 & \omega^2
\end{array} \right)                           ,\qquad
P_8(\F)_{(\bar0,\bar1)}=\F \left( \begin{array}{ccc}
0 & 1 & 0\\ 0 & 0 & 1\\ 1 & 0 & 0
\end{array} \right)   ,
$$
which is closely related to the $\mathbb{Z}_3$-gradings occurring on exceptional Lie algebras, as shown in \cite{Eldgradsencomposimetricas}.

\bigskip

Let $C$ be a Hurwitz algebra and consider the Jordan algebra of hermitian   matrices of size $3$,
$$ J=\mathcal{H}_3(C):=
\left\{ \left( \begin{array}{ccc}
\alpha_1 & \bar{a}_3 & a_2\\ a_3 & \alpha_2 & \bar{a}_1\\ \bar{a}_2 & a_1 & \alpha_3
\end{array} \right) \mid \alpha_1, \alpha_2, \alpha_3 \in \F, \;a_1, a_2, a_3 \in C \right\} ,$$
with commutative multiplication given by $xy:=\frac{1}{2}(x\cdot y + y \cdot x)$ (where $x \cdot y$ is the usual matrix product) and \emph{normalized trace form} given by $t_J=\frac{1}{3}(\alpha_1+\alpha_2+\alpha_3)$.
For such an algebra there is a decomposition $J=\F 1\oplus J_0$ for $J_0=\{x\in J\mid  t_{J}(x)=0\}$. For any $x,y\in J_0$, the product 
$$
x \ast y= xy - t_{J}(xy)1
$$
 gives a commutative multiplication on $J_0$. A remarkable Jordan algebra of this type is obtained starting from the Cayley algebra $\C$, that is, $\alb:=\mathcal{H}_3(\C)$, which is called the \emph{Albert algebra}, and
is the only exceptional simple Jordan algebra over $\F$ (see, for instance, \cite{Schafer}).

The algebra of derivations of $\alb$ is a simple Lie algebra of type $F_4$, which   will be denoted by $\f_4$.
As for $\der(\C)$, there is also an algebraic group isomorphism $\aut(\alb) \rightarrow \aut(\der(\alb))$ mapping each $\varphi\in \aut(\alb)$ to the automorphism $d\mapsto \varphi d \varphi^{-1}$, for any $d\in\der(\alb)$. Thus, gradings on $\alb$ are extended to gradings on $\f_4$.
Note that the linear map $D_{x,y}\colon  \alb \rightarrow \alb$ defined by
$$
D_{x,y}(z)=x(yz)-y(xz)
$$
for any $x,y,z\in\alb$, is a derivation and that the linear span $D_{\alb, \alb}=\{\sum_i D_{x_i,y_i}\mid x_i,y_i\in\alb\}$ of these derivations fills   the entire algebra $\der(\alb)=\f_4$.

\subsection{Tits' model}\label{subsec_Titsmodel}

In 1966, Tits provided a beautiful unified construction of all the exceptional simple Lie algebras
(\cite{Tits}). (Recall our assumptions on the field, although the construction is valid over fields of characteristic $\neq 2,3$.) When in this construction we use  a composition algebra and a
 Jordan algebra of hermitian $3\times 3$ matrices over a second composition algebra,
Freudenthal's Magic Square   is obtained:\smallskip

\begin{center}
 {\small
 \begin{tabular}{c|cccc}
 $\T(C,J)$ & $\mathcal{H}_3(\mathbb{F})$& $\mathcal{H}_3(\F\oplus \F)$& $\mathcal{H}_3(\Mat_2(\F))$& $\mathcal{H}_3(\C)$\\
\hline
 $\F$& $A_1$&$A_2$&$C_3$&${ F_4}$\\
 $ {\F\oplus \F}$& $A_2$&$A_2\oplus A_2$&$A_5$&${ E_6}$\\
 $ {\Mat_2(\F)}$& $C_3$&$A_5$&$D_6$&${ E_7}$\\
 $\C$& ${ F_4}$&${ E_6}$&${ E_7}$&${ E_8}$\\
 \end{tabular}}
 \end{center}  \medskip

\noindent This construction is reviewed here. For $C,C'$ two Hurwitz algebras and $J=\mathcal{H}_3(C')$, consider
$$
\T(C,J)=\der\,(C)\oplus (C_0 \otimes J_0) \oplus \der\,(J)
$$
with the anticommutative multiplication given by
$$
\begin{array}{l}
\bullet\  \der(C)\oplus   \der(J) \text{ is a Lie subalgebra of } \T(C,J), \\
\bullet\  {[}d, a\otimes x]=d(a) \otimes x, \\
\bullet\  [D, a\otimes x]=a \otimes D(x), \\
\bullet\  {[}a\otimes x, b\otimes y]= t_{J}(xy) d_{a,b}+[a,b]\otimes (x\ast y)+ 2t_C(ab)D_{x,y},
\end{array}
$$
for all $d\in \der(C)$, $D\in \der(J)$, $a,b\in C_0$ and $x,y\in J_0$. Then, $\T(C,J)$ is a Lie algebra. In particular, note that $\T(\C,\mathcal{H}_3(\F\oplus \F))$ and $\T(\F\oplus \F,\mathcal{H}_3(\C))$ (with $\C$   the Cayley algebra) are both Lie algebras of type $E_6$, and hence, isomorphic. We call   $\e_6:=\T(\F\oplus \F,\alb)$, identified with $\der(\alb) \oplus \alb_0$, which is naturally $\Z_2$-graded with even part $\f_4$.

Gradings on the  two ingredients involved in Tits' construction  (composition and Jordan algebras) can be used to get some interesting gradings on the resulting Lie  algebras.
Indeed, any automorphism $\varphi\in \aut (C)$ can be extended to an automorphism $\widetilde{\varphi}$ of $\T(C,J)$ by defining
$$
\begin{array}{l}
\widetilde{\varphi}(d) :=  \varphi d \varphi^{-1},  \\
\widetilde{\varphi}(a\otimes x)  :=  \varphi(a) \otimes x,\\
\widetilde{\varphi}(D)  :=  D,
\end{array}
$$
 for all $d\in \der(C)$,  $D\in \der\, (J)$, $a\in C_0$ and  $x\in J_0$.
 Similarly, any $\psi \in \aut(J)$ can be extended to an automorphism $\widetilde{\psi}$ of $\T(C,J)$.

\subsection{Elduque's model}\label{subsec_Elduquemodelo}

Even though Tits' construction is not
symmetric in the two composition algebras that are being used, the resultant Magic Square is indeed symmetric.
Thus,
some more
symmetric constructions of the exceptional Lie algebras
were developed by several authors.
Among them, one remarkable is that one in \cite{ElduqueCompo1}, based on symmetric composition algebras.

Let $(S,\ast, q)$ be a symmetric composition algebra and let
$$ \mathfrak{o}(S,q)=\{d\in \End_\F(S)\mid q(d(x),y)+q(x,d(y))=0\,\forall x,y\in S\} $$
be the corresponding orthogonal Lie algebra. On the subalgebra of $\mathfrak{o}(S,q)^3$ defined by
$$ \mathfrak{tri}(S,\ast,q)=\{(d_0,d_1,d_2)\in \mathfrak{o}(S,q)^3 \mid
d_0(x\ast y)=d_1(x)\ast y + x\ast d_2(y)\;\forall x,y\in S\}, $$
we have an order three automorphism $\vartheta$ given by
$$ \vartheta\colon  \mathfrak{tri}(S,\ast,q)\longrightarrow \mathfrak{tri}(S,\ast,q), \quad (d_0, d_1, d_2)\longmapsto (d_2, d_0, d_1), $$
which is called the \emph{triality automorphism}. Take the element of $\mathfrak{tri}(S,\ast,q)$ (denoted by $\mathfrak{tri}(S)$ when it is no ambiguity) given by
$$
t_{x,y}:=\left(\sigma_{x,y},\frac{1}{2}q(x,y)id-r_x l_y,\frac{1}{2}q(x,y)id-l_x r_y\right),
$$
where $\sigma_{x,y}(z)=q(x,z)y-q(y,z)x$, $r_x(z)=z\ast x$, and $l_x(z)=x\ast z$ for any $x,y,z\in S$.
Given two symmetric composition algebras $(S,\ast, q),(S',\star, q')$ we can consider the following symmetric construction:
$$
\mathfrak{g}(S,S') := \mathfrak{tri}(S,\ast, q)\oplus \mathfrak{tri}(S',\star, q')
\oplus (\bigoplus_{i=0}^2 \iota_i(S\otimes S'))
$$
where $\iota_i(S\otimes S')$ is just a copy of $S\otimes S'$ ($i=0,1,2$), and the skewsymmetric product is given by
$$
\begin{array}{l}
\bullet\  \mathfrak{tri}(S,\ast, q)\oplus \mathfrak{tri}(S',\star, q')  \text{ is a Lie subalgebra of }  \g(S,S'), \\
\bullet\   [(d_0,d_1,d_2), \iota_i(x\otimes x')]=\iota_i(d_i(x)\otimes x'), \\
 \bullet\   [(d'_0,d'_1,d'_2), \iota_i(x\otimes x')]=\iota_i(x\otimes d'_i(x')), \\
\bullet\   [\iota_i(x\otimes x'), \iota_{i+1}(y\otimes y')]=\iota_{i+2}((x*y)\otimes(x'\star y')), \\
\bullet\   [\iota_i(x\otimes x'),\iota_i(y\otimes y')] = q'(x',y')\vartheta^i(t_{x,y}) + q(x,y)\vartheta'^i(t'_{x',y'}), 
\end{array}
$$
for any $(d_0,d_1, d_2)\in \mathfrak{tri}(S)$, $(d'_0,d'_1, d'_2)\in \mathfrak{tri}(S')$, $x,y\in S$ and $x',y'\in S'$,  being $\vartheta$ and $\vartheta'$ the corresponding triality automorphisms. The so defined algebra $\mathfrak{g}(S,S')$ is a Lie algebra (\cite[Theorem~3.1]{ElduqueCompo1}), and the Freudenthal's Magic Square   is obtained again. In particular, $\mathfrak{g}(S,S')$ is of type $E_6$ whenever
one of the symmetric composition algebras involved has dimension $8$ and the other one has dimension $2$.

\smallskip

This construction is equipped with the following $\Z_2^2$-grading:
$$
\begin{array}{l}
 \g(S,S')_{(\bar{0},\bar{0})}=\mathfrak{tri}(S,\ast, q)\oplus \mathfrak{tri}(S',\star, q'), \\
 \g(S,S')_{(\bar{1},\bar{0})}=\iota_0(S\otimes S'), \\
  \g(S,S')_{(\bar{0},\bar{1})}=\iota_1(S\otimes S'), \\
   \g(S,S')_{(\bar{1},\bar{1})}=\iota_2(S\otimes S'),
\end{array}
$$
and also with the $\Z_3$-grading produced by the next extension of the triality automorphism, the map $\vartheta\in\aut(\g(S,S'))$ given by
$
\vartheta(d_0,d_1,d_2):=(d_2,d_0,d_1)$, $\vartheta(d'_0,d'_1,d'_2):=(d'_2,d'_0,d'_1)$ and $\vartheta(\iota_{i}(x\otimes x')):=\iota_{i+2}(x\otimes x'),
$
   for any $x\in S$, $x'\in S'$, $(d_0, d_1, d_2)\in \mathfrak{tri} (S)$, $(d'_0, d'_1, d'_2)\in \mathfrak{tri} (S')$.

Moreover,
if $f,f'$ are automorphisms of $S$ and $S'$ respectively, we can extend them to an automorphism $(f,f')$ of $\g(S,S')$ given by:
$$
\begin{array}{ll}
(f,f')(d_0,d_1,d_2)  &:=  (f d_0 f^{-1}, f d_1 f^{-1}, f d_2 f^{-1}), \\
(f,f')(d'_0,d'_1,d'_2)  &:=  (f' d'_0 f'^{-1}, f' d'_1 f'^{-1}, f' d'_2 f'^{-1}),\\
(f,f')(\iota_i(x\otimes x'))  &:=  \iota_i (f(x)\otimes f'(x')).
\end{array}
$$
This automorphism commutes with $\vartheta$
and with the automorphisms producing the $\Z_2^2$-grading.

\subsection{The 5-grading model}\label{subsec_modelo5grad}

We are interested in finding a construction of a Lie algebra of type $E_6$ in which an order two outer
automorphism fixing a subalgebra of type $C_4$ is easy to describe, as well as some fine gradings involving it.
The knowledge of such gradings (extracted from \cite{e6}) make us think that a good $\mathbb{Z}$-grading as a starting point would be the
related one to the following marked Dynkin diagram:

\setlength{\unitlength}{0.07in}
\begin{center}{\vbox{\begin{picture}(25,5)(4,-0.5)
\put(5,0){\circle{1}} \put(9,0){\circle{1}} \put(13,0){\circle{1}}
\put(17,0){\circle{1}} \put(21,0){\circle{1}}
\put(13,3){\circle*{1}}
\put(5.5,0){\line(1,0){3}}
\put(9.5,0){\line(1,0){3}} \put(13.5,0){\line(1,0){3}}
\put(17.5,0){\line(1,0){3}} \put(13,0.5){\line(0,1){2}}
\put(4.7,-2){$\scriptstyle \alpha_1$} \put(8.7,-2){$\scriptstyle \alpha_3$}
\put(12.7,-2){$\scriptstyle \alpha_4$} \put(16.7,-2){$\scriptstyle \alpha_5$}
\put(20.7,-2){$\scriptstyle \alpha_6$} \put(13.9,2.6){$\scriptstyle \alpha_2$}
\end{picture}}}\end{center}\medskip

Recall that the gradings on simple Lie algebras over cyclic groups, in particular the  $ \mathbb{Z}$-gradings,
were classified by Kac (\cite[Chapter VIII]{Kac}).

If $\Phi$ denotes the root system of $L $, a Lie algebra of type $E_6$, relative to a Cartan subalgebra $\mathfrak{h}$,
 $L_{\alpha}$ are the corresponding root spaces,
$\Delta=\{\alpha_1,\dots,\alpha_6\}$  is a basis of $\Phi$ and
$\sum_{i=1}^6n_i\alpha_i$ is the maximal root, then the above diagram determines a $ \mathbb{Z}$-grading
on $L$  as follows:
$$
L_p=\bigoplus_{\alpha\in\Phi\cup\{0\}}\{L_\alpha\mid
\alpha=\sum_{j=1}^6k_j\alpha_j,\, k_2=p\},
$$
for each $p\in\Z$, so that
$ L=\bigoplus_{p=-2}^2 L_p$.
The subalgebra $ L_0$ is reductive with
one-dimensional center $Z=\{h\in \mathfrak{h}\mid \alpha_j(h)=0\quad\forall
j\ne 2\}$ and semisimple part with root system $\{
\alpha\in\Phi\mid \alpha=\sum_{j\ne 2}k_j\alpha_j\}$ and basis
$\{\alpha_j\mid j\ne 2\} $, that is, an algebra of type $A_5$. Moreover,
\cite[p.\,108]{enci} implies that, in this $\mathbb{Z}$-grading, all the homogeneous components
(different from $L_0$) are $L_0$-irreducible modules. By counting the roots, one immediately observes that $\dim L_{\pm2}=1$ and $\dim L_{\pm1}=20$.

Therefore, if we take
  $V$   a $6$-dimensional vector space, we can identify $L_0$ with $ \mathfrak{gl}(V)$ (direct sum of a Lie algebra of type $A_5$ with a one-dimensional center) and we can also identify the components $L_2$,  $L_{1}$, $L_{-1}$ and $L_{-2}$
  with  $\bigwedge^6V$, $\bigwedge^3V$,  $\bigwedge^3V^*$ and $\bigwedge^6V^*$ respectively.
  Well known results of representation theory imply, for these $\slin_6(\F)$-modules, the key facts  $\dim\hom_{\slin(V)}(L_n\otimes L_m,L_{n+m})\le1$ if $n+m\ne0$, 
  $\dim\hom_{\slin(V)}(L_n\otimes L_{-n},\slin(V))=1$ and
  $\dim\hom_{\slin(V)}(L_n\otimes L_{-n},\F)=1$,
  for $n\in\{1,2\}$. As the restriction of the bracket
  $[\ ,\ ]\mid_{L_n\times L_m}\colon L_n\times L_m\to L_{n+m}$ is an $L_0$-invariant map for each $n,m\in\{\pm2,\pm1,0\}$, the above allows to recover the bracket once  we find  the $L_0$-invariant maps from
$L_n\times L_m$ to $L_{n+m}$.

These maps are standard in multilinear algebra (the reader can consult \cite[Appendix~B]{FulHar}).
First, for each natural $n$,
we have a bilinear product $\bigwedge^nV\times \bigwedge^nV^*\rightarrow \F$, $(u, f) \mapsto \langle u, f\rangle := \det(f_i(u_j))$,
if $u=u_1\wedge\dots\wedge u_n\in\bigwedge^nV$
and $f=f_1\wedge\dots\wedge f_n\in\bigwedge^nV^*$.
In particular we have $L_0$-invariant maps $L_{2}\times L_{-2}\to L_{0}$ and $L_{1}\times L_{-1}\to L_{0}$.
Second, we can consider  the contractions
$$
\begin{array}{lrcl}
L_1\times L_{-2}\rightarrow L_{-1},&(u,f)&\mapsto &u\lrcorner f,\\
L_2\times L_{-1}\rightarrow L_1,&(v,g) &\mapsto &v\llcorner g,
\end{array}
$$
determined  by $\langle w,u\lrcorner f\rangle=\langle w\wedge u,f\rangle$ and
$\langle  v\llcorner g,h\rangle=\langle v,g\wedge h\rangle$
for any $w\in L_1$
and $h\in L_{-1}$.
Third, the wedge product gives skewsymmetric maps $L_1\times L_1\to L_2$ and $L_{-1}\times L_{-1}\to L_{-2}$.
Finally, for each $n\in\{1,2\}$, $u\in L_n$ and $f\in L_{-n}$, define $\psi_{f,u}$ as the only element in $\slin(V)$ verifying $\tr(g \psi_{f,u})=\langle g\cdot u,f\rangle$ for any $g\in\slin(V)$ ($\cdot$ denotes the natural action of $\slin(V)$ on $\bigwedge^{3n}V$). This   provides too an $L_0$-invariant map $L_{-n}\times L_n\to L_{0}$.

The above guarantees the existence of scalars  $\alpha_{ij},\beta_i\in\F^\times$ such that the product in $L$ is
given by
\begin{equation}\label{eq_elproductoenlaZgrad}
\begin{array}{ll}
\bullet \ [u,v] = \alpha_{1,1 }\,u\wedge v ,  &\bullet \ [u',f'] = \alpha_{2,-2 }\,\psi_{f',u'} +\beta_2\,\langle u',f'\rangle  ,  \\
\bullet \ [u,f'] =\alpha_{1,-2 }\,u\lrcorner f'  , &\bullet \ [u',f]=\alpha_{-1,2 }\,u'\llcorner f  ,  \\
\bullet \ [u,f] = \alpha_{1,-1 }\,\psi_{f,u} +\beta_1\,\langle u,f\rangle  ,  &\bullet \ [f,g] = \alpha_{-1,-1 }\,f\wedge g ,
\end{array}
\end{equation}
for any $u,v\in L_1$, $f,g\in L_{-1}$, $u'\in L_2$, $f'\in L_{-2}$,
 and the actions of $L_0$ on $L_n$ and $L_{-n}$ are the actions of the Lie algebra $\glin(V)$ on $\bigwedge^{3n}V$ and $\bigwedge^{3n}V^*$ respectively. These scalars can be determined by imposing the Jacobi identity, as in \cite{miomodelos}, but it is not necessary for our purposes.

Denote by $T_1$ the torus of $\aut(L)$ producing this $\mathbb{Z}$-grading,
that is, the group of automorphisms of the form $f_\alpha$ for $\alpha\in\F^\times$ such that $f_\alpha\vert_{L_n}=\alpha^n\id$ for all $n\in\Z$.
\smallskip

Fix $\ii\in \F$ with $\ii^2=-1$. Take a basis $\{e_i\mid i=1,\dots,6\}$ of $V$ and let $\{e_i^*\mid i=1,\dots,6\}\subset V^*$ be the dual basis. Define   $\theta\colon L\to L$  by:
$$
\begin{array}{l}
    \theta(e_{\sigma(1)}\wedge e_{\sigma(2)}\wedge e_{\sigma(3)})
   := \text{sg}(\sigma)\, \ii e_{\sigma(4)}\wedge e_{\sigma(5)}\wedge e_{\sigma(6)}, \\
   \theta(e^*_{\sigma(1)}\wedge e^*_{\sigma(2)}\wedge e^*_{\sigma(3)})
   := -\text{sg}(\sigma) \,\ii e^*_{\sigma(4)}\wedge e^*_{\sigma(5)}\wedge e^*_{\sigma(6)}, \\
    \theta(\alpha I_6+A):=\alpha I_6-A^t,   \\
  \theta|_{L_2\oplus L_{-2}} :=-\id,
\end{array}
$$
for any $\alpha\in \F, A\in\mathfrak{sl}(V),\sigma\in S_6$.
Making use of Equation~(\ref{eq_elproductoenlaZgrad}), some straightforward computations prove  that $\theta\in\text{Aut}(L)$ and that besides
 $\dim(\fix \theta)=36$. Hence $\theta$ is an outer order two automorphism fixing a subalgebra of type $C_4$ and commuting  with the torus $T_1$.

\medskip

For each $\varphi\in\text{GL}(V)$, let   $\widetilde{\varphi}\colon L\to L$ be
$$
\begin{array}{ll}
\widetilde{\varphi}(u) := \varphi(u_1)\wedge\dotsc\wedge \varphi(u_r),    \\
\widetilde{\varphi}(f) :=(\varphi\cdot f_1) \wedge\dotsc\wedge (\varphi\cdot f_r),    \\
\widetilde{\varphi}(F) :=\varphi F\varphi^{-1},
\end{array}
$$
for any $u=u_1\wedge\dotsc\wedge u_r\in L_1 \cup L_2$, $f= f_1 \wedge\dotsc\wedge f_r \in L_{-1}\cup L_{-2}$
and $F\in L_0$,
with $\varphi\cdot f_i=f_i\varphi^{-1}\in V^*$. Actually $\psi_{\widetilde{\varphi}(f),\widetilde{\varphi}(u)}=\widetilde{\varphi}(\psi_{f,u})$,
what implies that $\widetilde{\varphi} $ is an automorphism of $L$. With abuse of notation, we will consider $\text{GL}_6(\F)\leq\text{Aut}(\mathfrak{e}_6)$. Notice that these automorphisms also commute with the torus $T_1$ (the torus itself is contained in $\GL(V)$, since $f_\alpha=\alpha \widetilde{I}_6$).
Furthermore, the centralizer of $T_1$ in $\aut(L)$ equals $\span{\widetilde{\varphi}\mid \varphi\in\GL(V)}\rtimes \span{\theta}$.
Indeed, if $ \gamma\in\aut(L)$ commutes with $T_1$, the restriction $\gamma\vert_{\slin(V)}$ belongs to $\aut(\slin(V))$, which is well known to be isomorphic to $\text{PSL}\,(V)\rtimes\span{\theta\vert_{\slin(V)}}$.

\subsection{Adams' model}\label{subsec_modeloAdams}
\label{Adams}

 Following the lines of the decomposition   of $\g_2=\slin(W)\oplus W \oplus W^*$  (\cite[\S22.2 and \S22.4]{FulHar}), for $W$
a three-dimensional vector space, Adams gave in \cite[Chapter~13]{Adams} a model for $E_6$ using three copies of a $3$-dimensional vector space $W_1=W_2=W_3=W$ and their dual spaces $W_i^*$ as follows:
 take
 $\mathcal{L} = \L_{\bar0}\oplus \L_{\bar1}\oplus \L_{\bar2}$, for
$$
\L_{\bar0} = \slin(W_1)\oplus \slin(W_2) \oplus \slin(W_3),
\quad \L_{\bar1} = W_1 \otimes W_2 \otimes W_3,
\quad \L_{\bar2} = W_1^* \otimes W_2^* \otimes W_3^*,
$$
  where $\sum \slin(W_i)$ is a Lie
subalgebra of type $3A_2$, its actions on $ W_1\otimes W_2\otimes
W_3$ and $W_1^*\otimes W_2^*\otimes W_3^*$  are the natural ones
(the $i$th simple ideal acts on the $i$th slot), and
$$
\begin{array}{ll}
\bullet \  [\otimes f_i,\otimes u_i]&=\sum_{\buildrel{k=1,2,3}\over{i\ne j\ne k}}
 f_i(u_i)f_j(u_j)\big(f_k(-)u_k-\frac13f(u_k)\id_{W_k}\big),\\
\bullet \  [ \otimes u_i,\otimes v_i]&=\otimes (u_i\wedge v_i),\\
\bullet \  [ \otimes f_i,\otimes g_i]&=\otimes (f_i\wedge g_i),\nonumber
\end{array}
$$
 for any   $u_i,v_i\in W_i$, $f_i,g_i\in W^*_i$. We have fixed nonzero
   trilinear
alternating  maps $\det_i\colon W_i\times W_i\times W_i\to \F$, so
that $u_i\wedge v_i$ denotes the element in $W_i^*$ given by
$\det_i(u_i,v_i,-)$  and $f_i\wedge g_i$ denotes the element in
$W_i^*$ given by $\det_i^*(f_i,g_i,-)$, being $\det_i^*$ the dual
map of $\det_i$.
 It turns out that  $\L$ is a Lie algebra   of type $E_6$.

Denote by $H_1$ the automorphism producing the $\Z_3$-grading, that is, $H_1|_{\L_{\bar i}}=\omega^i \id$, with $\omega$ a primitive cubic root of $1$ in $\F$. As any automorphism is  determined by  its action on $\L_{\bar1}$,
we can take  $H_2$ the   automorphism   such that $H_2(u\otimes v\otimes w)=v\otimes w\otimes u$ for any $u,v,w\in W$. Furthermore, each $f\in \GL(W)$ extends to $\Psi(f) \in \aut(\L)$,   determined by   $\Psi(f)(u\otimes v\otimes w)=f(u)\otimes f(v)\otimes f(w)$. Note that $ \Psi\colon\GL(W) \rightarrow \aut(\L)$ is a homomorphism of algebraic groups with kernel $\{\omega^nI_3\mid n=0,1,2\}$.

In particular, if we identify any endomorphism of $W$ with its matrix relative to a fixed basis of $W$, and take 
$T_{\alpha,\beta}=\Psi(\text{diag}(\alpha,\beta,(\alpha\beta)^{-1}))\in\Psi(\GL_3(\F) )$, we get the quasitorus $ \langle H_1,H_2,T_{\alpha,\beta}\mid \alpha,\beta\in\F^\times\rangle$ ($\mathcal{Q}_2$ in \cite{e6}), that produces a fine $\Z_3^2\times\Z^2$-grading on $\L$.


\subsection{ Model based on $\slin(2)\oplus\mathfrak{sl}(6)$}\label{subsec_modeloa5masa1}

Recall (see again \cite[Chapter~VIII]{Kac}) that there is a $\Z_2$-grading on any algebra of type $E_6$ with zero homogeneous component isomorphic to $A_5+A_1$ and the other one an irreducible module. Thus allows to endow     $\mathfrak{L}=\mathfrak{L}_{\bar0}\oplus \mathfrak{L}_{\bar1}$ with a Lie algebra structure, starting from
$U$ and $V$ vector spaces of dimensions $2$ and $6$ respectively, and by considering
   $\mathfrak{L}_{\bar0}= \slin(U)\oplus\slin(V)$ and $\mathfrak{L}_{\bar1}= U\otimes\bigwedge^3V$.

  For that purpose, take $b\colon U\times U\to\F$ a  (nonzero) skewsymmetric bilinear form, and note that $\mathfrak{sp}(U)=\slin(U)$.
  For each $v,w\in U$, consider the map $\varphi_{v,w}:=b(v,-)w+b(w,-)v\in\sp(U)$.
 If  $x=x_1\wedge x_2\wedge x_3\in\bigwedge^3V$ and $f\in\text{End}\,(V)$, denote  by
 $$
 \begin{array}{l}
 f(x):=f(x_1)\wedge x_2 \wedge  x_3 + x_1 \wedge f(x_2)\wedge  x_3 + x_1 \wedge  x_2 \wedge f(x_3),\\
 f\cdot x:=f(x_1)\wedge f(x_2) \wedge  f(x_3).
  \end{array}
  $$
  Fix a nonzero map $\bigwedge^6V\rightarrow \F$ and let $\langle\cdot,\cdot\rangle\colon \bigwedge^3V\times\bigwedge^3V\rightarrow \F$ be the related  (skewsymmetric) product. Note that $\langle h(x),y\rangle+\langle x,h(y)\rangle=(\text{tr}\, h)  \langle x,y\rangle$ and $\langle h\cdot x,h\cdot y\rangle=(\det h)  \langle x,y\rangle$
     for any $x,y\in\bigwedge^3V$ and any endomorphism $h\in\End(V)$.
 For each $x,y\in\bigwedge^3V$ we   denote by $[x,y]\,(=[y,x])$ the   element of $\slin(V)$ characterized by the property $\text{tr}(f[x,y])=\langle f(x),y\rangle$ for all $f\in\slin(V)$ (denote the composition by juxtaposition).

 Now we note, by using arguments as in Subsection~\ref{subsec_modelo5grad}, that there exist suitable scalars $\lambda,\mu\in \F^\times$ such that the  product on $ \mathfrak{L}$  given by
$$
\begin{array}{l}
\bullet \ \slin(U)\oplus\slin(V) \text{ is a subalgebra of   $\mathfrak{L}$}, \\
\bullet \ [g,v\otimes x]  =  g(v)\otimes x, \\
\bullet \ [f,v\otimes x]  =  v\otimes f(x), \\
\bullet \ [v\otimes x, w\otimes y]  =  \lambda\langle x,y\rangle \varphi_{v,w}  + \mu b(v,w)[x,y],
\end{array}
$$
for all $g\in\slin(U)$, $f\in\slin(V)$, $v,w\in U$ and $x,y\in\bigwedge^3V$, turns $\mathfrak{L}$ into a Lie algebra of type $E_6$.   %

The obvious advantage of this model is that allows to have a source of automorphisms commuting with that one producing the $\Z_2$-grading.
If we take $f_1\in \Sp(U)$ and $f_2\in \SL(V)$,
then the map $f_1\times f_2\colon \mathfrak{L}\to \mathfrak{L}$ given by
$$
\begin{array}{l}
  f_1\times f_2(g+f):= f_1gf_1^{-1}+f_2ff_2^{-1},\\
  f_1\times f_2(v\otimes x):= f_1(v)\otimes f_2\cdot x,
 \end{array}
   $$
   for any $g\in\slin(U)$, $f\in\slin(V)$, $v\in U$ and $x\in\bigwedge^3V$, is an automorphism of $\mathfrak{L}$.
    It is not difficult to prove it:
 $$
 \begin{array}{c}
 \text{tr}(f  (f_2[x,y]f_2^{-1}))= \text{tr}((f_2^{-1}ff_2) [x,y])=\langle f_2^{-1}ff_2(x),y\rangle\\
 =\langle f_2\cdot (f_2^{-1}ff_2)(x),f_2\cdot y\rangle
 = \langle f(f_2\cdot x),f_2\cdot y\rangle=\text{tr}(f [f_2\cdot x,f_2\cdot y]),
 \end{array}
 $$
 so that  $[f_2\cdot x,f_2\cdot y]=f_2[x,y]f_2^{-1}$. Besides $\langle f_2\cdot x,f_2\cdot y\rangle=\span{x,y}$,
 $ \varphi_{f_1(v),f_1(w)} =f_1 \varphi_{v,w} f_1^{-1}$, so that
 $f_1\times f_2 ([v\otimes x, w\otimes y])  =[f_1\times f_2 (v\otimes x),  f_1\times f_2( w\otimes y)]$.
 Analogously the rest of relations can be checked.
  Thus, as $f_1\times\id_V$ always commutes with $\id_U\times f_2$,   the direct product $\Sp(U)\times \SL(V)$ can be taken as a subalgebra of $\aut (\mathfrak{L})$.


\subsection{ Model based on $\mathfrak{sp}_8(\F)$}\label{subsec_modeloc4}

In \cite[Section~5.3]{e6} a model of $\frak{e}_6$ based on an algebra of type $C_4$ is outlined.
Take the  vector space $\mathcal{V}=\F^8$ and the nondegenerate symplectic bilinear form
$b\colon \mathcal{V}\times \mathcal{V}\to\F$ given by $b(u,v)=u^tCv$ for the matrix
$$
C=
\left(\begin{array}{cccc}
0&\sigma_3&0&0\\
\sigma_3&0&0&0\\
0&0&0&\sigma_3\\
0&0&\sigma_3&0
\end{array}\right)
\qquad\text{with }
\sigma_3=\left(\begin{array}{cc}
0&-1\\
1&0
\end{array}\right).
$$
Let $\mathbf{L}_0:=\frak{sp}(\mathcal{V},b)$, which relative to the canonical basis coincides with
$\{x\in\Mat_{8\times8}(\F)\mid xC+Cx^t=0\}$,
a simple Lie algebra of type
$C_4$.
Now, we consider the contraction
$$
\begin{array}{llcl}
c\colon  &\bigwedge^4\mathcal{V}& \longrightarrow &\bigwedge^2\mathcal{V} \\
&v_1\wedge v_2\wedge v_3 \wedge v_4  &\mapsto&\sum_{\tiny
{\begin{array}{l}\sigma\in S_4\\
\sigma(1)<\sigma(2)\\
\sigma(3)<\sigma(4)\end{array}}}
(-1)^\sigma b(v_{\sigma(1)},v_{\sigma(2)})
v_{\sigma(3)}\wedge v_{\sigma(4)}.
\end{array}
$$
Its kernel $\ker c$
is isomorphic to $V(\lambda_4)+V(0)$ as $\mathbf{L}_0$-module ($\lambda_i$ the fundamental weights, with the notations in \cite{Humphreysalg}). Let $\mathbf{L}_1$ be the submodule of $\ker c$
  isomorphic to $V(\lambda_4)$.
Then the vector space $\mathbf{L}=\mathbf{L}_0\oplus \mathbf{L}_1$ can be endowed with a $\Z_2$-graded Lie algebra structure such that $\mathbf{L}$ turns out to be an algebra of type $E_6$.
Call  $ \Theta$ the grading automorphism.

For each  $f\in\Sp(\mathcal{V})$, the map $f^\diamondsuit\colon \mathbf{L}\to\mathbf{L}$ given by
$$
\begin{array}{l}
f^\diamondsuit(g):=f^{-1}gf, \\
f^\diamondsuit(v):=\sum_{ }f(v_{i_1})\wedge f(v_{i_2})\wedge f(v_{i_3}) \wedge f(v_{i_4}) ,
\end{array}
$$
 if $g\in  {\mathbf{L}}_{\bar0}$, $v=\sum_{ } v_{i_1}\wedge v_{i_2}\wedge v_{i_3} \wedge v_{i_4} \in {\mathbf{L}}_{\bar1} $,
  is an automorphism of $\mathbf{L}$. Furthermore,
  $\Cent_{\aut (\mathbf{L})}(  \Theta)=\{f^\diamondsuit\mid f\in\Sp(V)\}\cdot\span{ \Theta}\cong\Sp(V)\times\Z_2$.


\section{Description of the gradings}\label{sec_graduaciones4}

The main theorem  in \cite{e6} gives the classification up to equivalence of the fine gradings on $\e_6$. It encloses the following table, which provides, for    each fine grading, its  universal grading group and the type of the  grading. These invariants are enough to distinguish the considered equivalence class.
\begin{center}{
\begin{tabular}{ |c|c|}
\hline
 Universal grading group & Type \cr
\hline\hline
   $ \Z_3^4$   &  $( 72,0,2 )$ \cr
\hline
   $\Z^2\times\Z_3^2$   &  $( 60,9 )$  \cr
\hline
  $\Z_3^2\times\Z_2^3$   &  $( 64,7 )$  \cr
\hline
  $\Z^2\times\Z_2^3$   &  $( 48,1,0,7 )$  \cr
\hline
  $\Z^6 $   &  $(72,0,0,0,0,1  )$  \cr
\hline
  $\Z^4\times\Z_2$   &  $(72,1,0,1  )$  \cr
\hline
  $ \Z_2^6$   &  $( 48,1,0,7 )$ \cr
\hline
  $\Z\times\Z_2^4$   &  $(  57,0,7)$\cr
\hline
   $\Z_3^3\times\Z_2$   &  $( 26,26 )$  \cr
\hline
  $\Z^2\times\Z_2^3$   &  $(60,7,0,1   )$  \cr
\hline
  $\Z_4\times\Z_2^4$   &  $(48,13,0,1     )$  \cr
\hline
  $\Z \times\Z_2^5$   &  $( 73,0,0,0,1  )$  \cr
\hline
  $ \Z_2^7$   &  $( 72,0,0,0,0,1  )$  \cr
\hline
  $ \Z_4^3$   &  $( 48,15 )$  \cr
\hline
 \end{tabular}}\end{center}\smallskip

 In this section we describe the six gradings by infinite universal grading groups different from the Cartan grading. Our descriptions will not be, in most  cases, the same as  those ones in \cite{e6}, to adapt them to our study of symmetries.
This   makes necessary to recognize properties of the quasitori producing them, and, particularly, of the automorphisms involved.
There are five  conjugacy classes of order three automorphisms of $\e_6$, characterized by the isomorphism type of the fixed subalgebra. It is said that   such an automorphism is of type
$3B$, $3C$, $3D$, $3E$ and $3F$ when the fixed subalgebra is of type $A_5+Z$, $3A_2$, $D_4+2Z$, $A_4+A_1+Z$
and $D_5+Z$ respectively. Observe that in this case the dimension of the fixed subalgebra ($36$, $24$, $30$, $28$ and $46$, respectively) determines the conjugacy class.
Also, there are four conjugacy classes of order two automorphisms of $\e_6$. The inner ones have fixed subalgebras isomorphic to $A_5+ A_1$ and $D_5+Z$, and are called of type $2A$ and $2B$ respectively, and the outer ones have fixed subalgebras isomorphic to $F_4$ and $C_4$, and are called of type $2C$ and $2D$ respectively.

\subsection{Inner $  \Z_2^3\times\Z^2$-grading}\label{subsec_gradZcuadZ2cubo}

Consider the Tits'   construction $\T(\C,\J)=\der(\C) \oplus (\C_0 \otimes \J_0) \oplus\, \der(\J) $ for  $\J=\mathcal{H}_3(\F \oplus \F)$. Note that this Jordan algebra is isomorphic to $\Mat_{3\times3}(\F)^+$ with the symmetrized product.
Consider the $\Z^2$-grading on $\J$ given by
$$
\begin{array}{c}\J_{0,0} = \langle E_1, E_2, E_3\rangle, \\
 \J_{1,0}=\langle E_{12}\rangle, \; \J_{0,1}=\langle E_{23}\rangle, \; \J_{1,1}=\langle E_{13}\rangle, \\
  \J_{-1,0} = \langle E_{21}\rangle, \; \J_{0,-1}=\langle E_{32}\rangle, \; \J_{-1,-1}=\langle E_{31}\rangle,
  \end{array}
  $$
where $E_{ij}$ is the matrix with $1$ in the entry $(i,j)$ and $0$ elsewhere, and $E_i:=E_{ii}$.
Let $T_2:=\{s_{\alpha, \beta}\mid \alpha,\beta\in \F^\times\}\le\aut(\J)$ be the torus producing the grading, that is,
$s_{\alpha,\beta}\vert_{\J_{n,m}}= \alpha^n \beta^m\id$. We also use the notation $s_{\alpha, \beta}$ for its extension  to an automorphism  of $\T(\C,\J)$, so as $T_2$ as a torus of $\aut(\T(\C,\J))$.

Besides, we have a $\Z_2^3$-grading on $\T(\C,\J) $ induced by the grading  on $\C$
given by Equation~(\ref{eq_graddeCdeCartan}).
We will denote  by $f_1$, $f_2$ and $f_3$ the automorphisms of $\T(\C,\J)$ producing that grading.
As seen in \cite{e6}, a nontoral group isomorphic to $\mathbb{Z}_2^3$ is unique up to conjugation, and its centralizer is isomorphic to itself direct product with  a copy of the group $\PSL_3(\F)$ (in our case, the extension of $\aut(\J)$),
what proves that the quasitorus $ P_1:= T_2\langle  f_1, f_2, f_3 \rangle \cong (\F^\times)^2\times\Z_2^3$ is a MAD-group (conjugated to $\mathcal{Q}_4$ of \cite{e6}) producing a fine $\Z^2\times\Z_2^3$-grading   denoted by $\Gamma_1$.

\subsection{$ \Z_2^4\times\Z $-grading}\label{subsec_gradZZ2cuarta}

A generalization of the $\mathbb{Z}$-grading on the Albert algebra described in \cite[Subsection~4.1]{EK11}
is the following:
For $1$ and $1'$     paraunits of two para-Hurwitz algebras, $S$ and $S'$,
consider the map $d:=\ad(\iota_0(1\otimes 1'))$,  which is a semisimple derivation of $\g(S,S')$ with set of eigenvalues
$\{\pm2,\pm1,0\}$. Thus, the eigenspace decomposition gives the following
$\Z$-grading (5-grading) on $  \g(S,S')$:
\begin{equation}\label{eq_laZgrad}
 \begin{array}{l}
 \g(S,S')_{\pm2}=S_{\pm}(S_0,S'_0), \\
  \g(S,S')_{\pm1}=\nu_{\pm}(S\otimes S'), \\
   \g(S,S')_0 = t_{S_0, S_0}\oplus t_{S'_0,S'_0}\oplus \iota_0 (S_0\otimes S'_0) \oplus \F\iota_0 (1\otimes 1'),
 \end{array}
\end{equation}
where   $S_0$ and $S_0'$ denote the trace zero elements of $S$ and $S'$,
and we are using the notation
$$
\begin{array}{l}
S_{\pm}(y,y'):=t_{1,y}+t'_{1,y'}\pm \ii \iota_0(y\otimes 1 + 1\otimes y') \\
\nu_{\pm}(y\otimes y'):=\iota_1(y\otimes y')\mp \ii \iota_2(\bar{y}\otimes \bar{y}')
\end{array}
$$
 for all $y\in S$, $y'\in S'$, and for a fixed scalar $\ii \in \F$ such that  $\ii^2=-1$.
 Moreover, this $\mathbb{Z}$-grading can be refined with any grading coming from $S$ or $S'$, since the derivation $d$ commutes with the automorphism $(f,f')$ described in Subsection~\ref{subsec_Elduquemodelo}.

We particularize   this $\Z$-grading to  the algebra of type  $ E_6$ constructed from the para-Hurwitz algebras $S_8$ and $S_2$ of dimensions $8$ and $2$ respectively. Take  the $\Z_2^3$-grading on $S_8$
 and the $\Z_2$-grading on $S_2$ coming from the corresponding $\Z_2^3$ and $\Z_2$-gradings on $\C$ and $\F\oplus\F$ as in Subsection~\ref{subsec_involvedstructures}. 
Thus we obtain a $\Z_2^4\times\Z$-grading on $\g(S_8,S_2)$, denoted by $\Gamma_2$. This is a fine grading equivalent to that one produced by $\mathcal{Q}_8$ in \cite{e6},
because if $\varrho$ denotes the order two automorphism of $\g(S_8,S_2)$ producing  the $\Z_2$-grading which comes from extending that one on $S_2$, 
then $\varrho$ fixes a subalgebra of type $F_4$. Indeed, if we denote  $e_1=(1,0)$ and $e_2=(0,1)$, the grading on $p(\F\oplus\F)=S_2$ is given by $(S_2)_{\bar0}=\span{1=e_1+e_2}$ and $(S_2)_{\bar1}=\span{e_1-e_2}$, so that $\varrho
(\iota_i(x\otimes e_1))=\iota_i(x\otimes e_2)$ and $\fix\varrho=\mathfrak{tri}\,(S_8)\oplus(\oplus_{i=0}^2\iota_i(S_8\otimes \F 1))\cong\g(S_8,\F)\cong\f_4$.

\subsection{$ \Z_2\times\Z^4 $-grading}\label{subsec_gradZcuartaZ2}

First we provide  a $\Z^4$-grading on $ \g(S_8,S)$ inspired in \cite{EK11}, where $S$ and $S_8$ are  para-Hurwitz algebras, the second one of dimension $8$. Take $a_1=(1,0,0,0)$, $a_2=(0,1,0,0)$, $g_1=(0,0,1,0)$, $g_2=(0,0,0,1)$   generators of the group $\Z^4$, and $a_0=-a_1-a_2, g_0=-g_1-g_2$. Set the degrees of the $\Z^4$-grading as follows:
$$
\begin{array}{rcccl}
\deg\, \iota_i(e_1\otimes s) & = & a_i & = & -\deg\, \iota_i(e_2 \otimes s), \\
\deg\, \iota_i(u_i\otimes s) & = & g_i & = & -\deg\, \iota_i(v_i \otimes s), \\
\deg\, \iota_i(u_{i+1}\otimes s) & = & a_{i+2}+g_{i+1} & = & -\deg\, \iota_i(v_{i+1} \otimes s), \\
\deg\, \iota_i(u_{i+2}\otimes s) & = & - a_{i+1}+g_{i+2} & = & -\deg\, \iota_i(v_{i+2} \otimes s),
\end{array}
$$
where $s\in S$ and $B=\{e_1, e_2, u_0, u_1, u_2, v_0, v_1, v_2\}$ is a canonical basis of the algebra $S_8$. Also set $\deg(t)=(0,0,0,0)$ for all $t\in \mathfrak{tri}(S)$, and $\deg  t_{x,y}=\deg  \iota_0(x\otimes s) + \deg \iota_0(y\otimes s)$, if $x,y\in B$.
It is a straightforward computation that this assignment provides a $\mathbb{Z}^4$-grading on $\g(S_8,S)$.

In particular, all the exceptional Lie algebras in the fourth row of the magic square, namely,
$F_4$,  $E_6$, $E_7$ and $E_8$, are  $\mathbb{Z}^4$-graded.
In fact, such grading is a root grading for the root system of $F_4$, taking into account the interesting relationship between fine gradings and root gradings which has been recently stated in
\cite{ElduqueRootgradings}.

When applied to $E_6$, viewed as $\g(S_8, S_2)$ for $S_2$ the symmetric composition algebra $p(\F\oplus\F)$,
the torus $T_4$ inducing the $\Z^4$-grading commutes with the outer automorphism $\varrho$ and the quasitorus $P_3:=\langle T_4\cup\{\varrho\}\rangle$ provides a $\Z^4 \times \Z_2$-fine grading on $\g(S_8, S_2)$, denoted by $\Gamma_3$. This MAD-group is essentially $\mathcal{Q}_6$ of \cite{e6}, taking into account that the
 $\Z^4  $-grading produced by the restriction of $T_4$ to $\fix\varrho\cong\f_4$ is the Cartan grading.


\subsection{$ \Z_3^2 \times\Z^2$-grading}\label{subsec_gradZcuadZ3cuad}

The $ \Z_3^2 \times\Z^2$-grading described in Subsection~\ref{subsec_modeloAdams}, produced by $\mathcal{Q}_2$ in \cite{e6}, admits different realizations convenient for our objectives.

Consider again the Tits' construction $\T(\C,\J)=\der(\C) \oplus (\C_0 \otimes \J_0) \oplus \der(\J) $ applied to the Jordan algebra  $\J=\mathcal{H}_3(\F \oplus \F)\cong\Mat_{3\times3}(\F)^+$. Its Lie algebra of derivations $\der(\J)$ can be identified with $\J_0$ as a vector space, since for $x\in\J_0$, the map $\ad(x)$ is a derivation of $\Mat_{3\times3}(\F)^+$,  being $\ad(x)(y)=xy-yx$.
This allows to identify $\e_6$ with $ \der(\C) \oplus (\C_0 \otimes \J_0) \oplus \F \otimes \J_0$ and then, as
$\C=\C_0\oplus\F$,  with
\begin{equation}\label{eq_paraverbienlosmodulos}
\e_6\cong\der(\C) \oplus (\C \otimes \J_0).
\end{equation}

 Think of the $\Z_3^2$-grading obtained on $\e_6$ when grading $\J$ through the assignment $\deg(b)=(\bar1,\bar0)$, $\deg(c)=(\bar0,\bar1)$, for
$$\small{
b=\left( \begin{array}{ccc}
1 & 0 & 0\\ 0 & \omega & 0\\ 0 & 0 & \omega^2
\end{array} \right), \quad
c=\left( \begin{array}{ccc}
0 & 0 & 1\\ 1 & 0 & 0\\ 0 & 1 & 0
\end{array} \right)},
$$
where $\omega$ denotes a primitive cubic root of the unit, and then take $h_1$ and $h_2$ as the automorphisms of $\e_6$ producing such  $\Z_3^2$-grading.  Observe that the simultaneous diagonalization has   fixed subalgebra isomorphic to $\g_2 = \der (\C)$  and all the other homogeneous components  can be identified with   $\C$  as $L_e$-modules (obvious from Equation~(\ref{eq_paraverbienlosmodulos})), so that every order three element in
the subgroup $\span{h_1,h_2}$   has type $3D$.

Moreover, consider the $\Z^2$-grading   on $\C$, which is extended to a $\Z^2$-grading on $\e_6$. The two-dimensional torus producing this grading will be denoted by $\T_2$. Then, the quasitorus $ P_4:=\langle h_1, h_2, \T_2 \rangle \cong\Z_3^2 \times (\F^\times)^2 $ is conjugated to $\mathcal{Q}_2$ (although $\span{h_1,h_2}$ is not conjugated to $\span{H_1,H_2}$, as proved in \cite{e6}), so that it produces a fine $\Z_3^2\times \Z^2 $-grading of type $(60,9)$ on $\e_6$, denoted by $\Gamma_4$.


\subsection{$ \Z_2^5 \times\Z$-grading}\label{subsec_gradZZ2quinta}

For this grading, the 5-grading model in Subsection~\ref{subsec_modelo5grad} is used.
Note that, if $\varphi\in \GL(V)$, the condition $\theta\widetilde{\varphi}(A)=\widetilde{\varphi}\theta(A)$ for all $A\in\slin(V)$ is equivalent to the fact $\varphi^t\varphi\in\F I_6$. This is a necessary not sufficient condition for the automorphisms $\widetilde{\varphi}$ and $\theta$ to commute. They commute when $ \widetilde{\varphi}^{-1}\theta^{-1}\widetilde{\varphi}\theta\vert_{L_1+L_{-1}}=\id$; since $L_1+L_{-1}$ generate $L$
as an algebra.
For instance, if $\varphi$ is given by the diagonal matrix $\diag(\alpha_1,\alpha_2,\alpha_3,\alpha_4,\alpha_5,\alpha_6)$,
then $\widetilde{\varphi}$ commute with $\theta$ if and only if $ \alpha_{\sigma(1 )}\alpha_{\sigma( 2)}\alpha_{\sigma( 3)}=\alpha_{\sigma(4 )}\alpha_{\sigma(5 )}\alpha_{\sigma( 6)}  $ for all permutation $\sigma\in S_6$, which is equivalent to 
the fact that $\frac1{\alpha_1}\varphi$ has order two and determinant equal to $1$.

In particular, if we denote $f_{ij}=\diag(\alpha_1,\alpha_2,\alpha_3,\alpha_4,\alpha_5,\alpha_6)$ for $\alpha_i=\alpha_j=-1$ and the remaining elements in the diagonal equal to $1$, the above implies that all of their extensions commute with both $\theta$ and $T_1$, so that we can consider the quasitorus $P_5:=T_1\cdot \langle \theta, \widetilde{f}_{12}, \widetilde{f}_{13}, \widetilde{f}_{14}, \widetilde{f}_{15}\rangle \cong \F^\times\times\mathbb{Z}_2^5$. In order to check that it is a MAD-group of $\aut(L)$ (and hence   conjugated to $\mathcal{Q}_{12}$ in \cite{e6}), it is enough to take into account that
\begin{itemize}
\item[(i)]
the centralizer of the torus $T_1$ is $\widetilde\GL(V)\cup \widetilde\GL(V)\theta$;
\item[(ii)] If $\varphi,\psi\in\GL(V)$, their extensions commute if and only if $ {\varphi}^{-1}{ \psi}^{-1} {\varphi} { \psi}
    =\omega^i I_6$ for $\omega^3=1$ and some $i=0,1,2$;
    but in case $\varphi^2=I_6$, the    extensions  $\widetilde{\varphi}$ and $\widetilde{\psi}$ commute if and only if $\varphi$ and $\psi$ do;
    \item[(iii)] The element $\widetilde{f}_{12}\widetilde{f}_{34}\widetilde{f}_{56}=-\widetilde{I}_6=f_{-1}\in T_1$;
\item[(iv)] A matrix $A\in\Mat_{6\times6}(\F)$ commutes with all the order two diagonal matrices if and only if $A$ itself is a diagonal matrix.
\end{itemize}
The induced fine grading will be denoted by $\Gamma_5$.


\subsection{Outer $ \Z_2^3\times\Z^2 $-grading}\label{subsec_gradZoutercuadZ2cubo}

According to \cite[Section~5.3]{e6}, the following is a  $\Z^2 \times \Z_2^3$-fine grading on $\mathbf{L}$, the algebra of type $E_6$ described in Subsection~\ref{subsec_modeloc4}.
Denote
$\sigma_1= \left(\begin{array}{cc}
0&1\\
1&0
\end{array}\right)
$
and
$\sigma_2= \left(\begin{array}{cc}
1&0\\
0&-1
\end{array}\right).
$
Then the automorphisms
$$
\begin{array}{l}
\tau_{\alpha,\beta}=
\diag({\alpha,\alpha,\frac1\alpha,\frac1\alpha,\beta,\beta,\frac1\beta,\frac1\beta}) ,\\
g_1=\diag({\sigma_1,\sigma_1,\sigma_1,\sigma_1}) ,\\
g_2=\diag({\sigma_2,\sigma_2,\sigma_2,\sigma_2}) ,
\end{array}
$$
constitute a MAD-group of $\Sp_8(\F)\cong\aut(\mathbf{L}_0)$ and hence
$
P_6:=  \langle \Theta,  {g_1}^\diamondsuit,  {g_2}^\diamondsuit \rangle \cdot \{ \tau_{\alpha,\beta}^\diamondsuit\mid \alpha,\beta\in\F^\times\} \cong\mathbb{Z}_2^3 \times  (\F^\times)^2$  is a MAD-group of $\aut(\mathbf{L})$.

Although in $\Sp_8(\F)$ it is very easy to find elements whose extensions move the elements in the MAD-group $P_6$, the disadvantage is that all of them commute with $\Theta$. That forces us to look for a different realization through the 5-grading model  of Subsection~\ref{subsec_modelo5grad}. Our quest is inspired in the following  key fact about the above grading: the fixed part of the two-dimensional torus must be a subalgebra of dimension $18$ (and rank $6$), and $\theta$ acts on it fixing a subalgebra of dimension $8$ (and rank $4$).

 Consider the elements of $\text{GL}_6(\F)$ given by
$$
\begin{array}{l}
\psi_{a,a'}=  \left(\begin{array}{c|c}\begin{array}{cc} a&a' \\ -a'&a \end{array} & 0\\ \hline0&I_4\end{array}\right),\\
g'_1=\text{diag}(1,1,1,-1,1,-1),  \\
 g'_2=\text{diag}(1,1,1,-1,-1,1),
\end{array}
$$
where $a,a'\in \F$ satisfy $a^2+a'^2=1$. Then, we have a one-dimensional torus $T'_1=\{\widetilde{\psi}_{a,a'} \mid a,a'\in \F, a^2+a'^2=1 \}\le\aut(L)$, since this is a connected diagonalizable group (the ideal of $\F[x,y]$ generated by the polynomial $x^2+y^2-1$ is prime)
and a maximal quasitorus of $\aut(L)$ given by   
$$
P'_6:=  \langle \theta, \widetilde{g}'_1, \widetilde{g}'_2 \rangle \cdot T_1\cdot T'_1\cong\mathbb{Z}_2^3 \times  (\F^\times)^2 .
$$
 For proving its maximality, take $f$ in the centralizer. We can replace $f$ with $f\theta$, if necessary, to have that there exists $\varphi\in\GL_6(\F)$ such that $f=\widetilde{\varphi}$. As $f$ commutes with $\theta$, then $\varphi\varphi^t=I_6$, the identity matrix. Also $f$ commutes with $\psi_{0,1}$, so that there are $x,y\in\F$
 such that $\varphi=\left(\begin{array}{c|c}\begin{array}{cc} x&y \\ -y&x \end{array} & 0\\ \hline0&B\end{array}\right)$. The fact $\varphi\varphi^t=I_6$ means that $x^2+y^2=1$, so that by replacing $\varphi$
 with   $\varphi\psi_{x,y}$  we can assume that $x=1$ and $y=0$.
 By item ii) above, $B$ commutes with $\diag(1,-1,-1,1)$ and with $\diag(1,-1,1,-1)$, so that there are $b_i$'s for $i=1,\dots,4$, such that $B=\diag(b_1,b_2,b_3,b_4)$. As $BB^t=I_4$, then $b_i^2=1$, but again using that $\varphi$ commutes with $\theta$ we get that $\det(B)=1$ and the number of $1$'s in its diagonal is either 0, or 2, or 4. Hence $B\in\span{\diag(1,-1,1,-1),\diag(1,-1,-1,1),-I_4}$ and $\widetilde\varphi\in P_6$ (note that $\psi_{-1,0}f_{-1}=
 \left(\begin{array}{c|c}I_2 & 0\\ \hline0&-I_4\end{array}\right)$).


\section{Calculating the Weyl groups}\label{sec_gruposWeyl}

In this section we compute $\W(\Gamma_i)=\aut(\Gamma_i)/\stab(\Gamma_i)$ for $i=1,\dots,6$.
Recall that if  $P_i$ is the MAD-group producing the grading $\Gamma_i$, then $G_i=\X(P_i)$ is the universal grading group,
and for each $f\in \aut(\Gamma_i)$, there is  a group isomorphism $\alpha_f\in\aut(G_i)$   such that $f(L_s)= L_{\alpha_f(s)}$ for all $s\in\supp(\Gamma_i)$. That allows us to identify $\W(\Gamma_i)$ with a subgroup of $\aut(G_i)$ and consequently to take a basis of the (always finitely generated) group and work with the matrices relative to that basis.
Concretely, if $G=\Z_p^m\times\Z^n$, we fix
\begin{equation}\label{eq_basedelgrupo}
\begin{array}{c}
\{(\bar1,\dots,\bar0, 0,\dots,0),\dots,(\bar0,\dots,\bar1, 0,\dots,0),\\
(\bar0,\dots,\bar0, 1,\dots,0),\dots,(\bar0,\dots,\bar0, 0,\dots,1)\}
 \end{array}
 \end{equation}
 as canonical basis.
We also fix scalars $\ii,\omega,\xi,\zeta\in\F$ such that $\ii^4=\zeta^{12}=\omega^3=\xi^9=1$, primitive roots of the unit.


\subsection{Weyl group of the inner $\Z_2^3 \times \Z^2$-grading}\label{subsec_WeylZ2cuboZcuad}

Take $\Gamma_1$   the   $G_1=\mathbb{Z}_2^3\times\mathbb{Z}^2$-grading on $\e_6$ described in Subsection~\ref{subsec_gradZcuadZ2cubo}, which is produced by a quasitorus of inner automorphisms. Identify any element of $\aut(G_1)$ with its  matrix (by columns)
relative to the canonical basis (\ref{eq_basedelgrupo}),
 which is a $3+2$-block matrix
  whose  first three rows have coefficients in $\mathbb{Z}_2$ and the other ones in $\Z$.
\begin{prop}
The Weyl group   $\W(\Gamma_1)$ coincides with the set of block matrices
$$
\left\{\left(\begin{array}{c|c}A & C\\ \hline0&B\end{array}\right)\mid A\in\GL_3(\Z_2),B\in\langle\tau_1,\tau_2\rangle,
C\in\Mat_{3\times2}(\mathbb{Z}_2)\right\}
$$
where
$$
\tau_1=\left(\begin{array}{cc}0&-1\\1&-1\end{array}\right), \quad  \tau_2=\left(\begin{array}{cc}-1&1\\0&1\end{array}\right),
 $$
generate the dihedral group $D_3$. Thus
$$
\W(\Gamma_1)
\cong \Mat_{3\times2}(\Z_2) \rtimes (\GL_3(\mathbb{Z}_2) \times D_3).
$$
\end{prop}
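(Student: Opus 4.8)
The plan is to compute the Weyl group by determining exactly how automorphisms of $\e_6$ can permute the homogeneous components of $\Gamma_1$, working through the explicit model $\T(\C,\J)$ with $\J=\mathcal{H}_3(\F\oplus\F)\cong\Mat_{3\times3}(\F)^+$. Recall that $\W(\Gamma_1)=\aut(\Gamma_1)/\stab(\Gamma_1)=\norm(P_1)/\Cent(P_1)$, and every element is realized as an $\alpha_f\in\aut(G_1)$. Since $P_1=T_2\span{f_1,f_2,f_3}$ with the torus part $T_2\cong(\F^\times)^2$ producing the $\Z^2$-factor and the $\Z_2^3$-factor coming from the octonion grading (\ref{eq_graddeCdeCartan}), I would first argue that $\W(\Gamma_1)$ must preserve the torsion subgroup $\Z_2^3$ of $G_1$; this forces the lower-left block of $\alpha_f$ to vanish, giving the block-upper-triangular shape $\left(\begin{smallmatrix}A&C\\0&B\end{smallmatrix}\right)$ with $A\in\GL_3(\Z_2)$ and $B\in\GL_2(\Z)$. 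The content of the proposition is then to pin down which $A$, $B$, $C$ actually occur.

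\textbf{Producing the elements.} I would exhibit enough automorphisms to realize every matrix in the claimed group. For the $\GL_3(\Z_2)$-factor acting on the octonion $\Z_2^3$-grading: since a nontoral $\Z_2^3$ is unique up to conjugation and $\aut(\C)=G_2$ acts on it, the Weyl group of the $\Z_2^3$-grading on $\C$ is the full $\GL_3(\Z_2)$, and these octonion automorphisms extend (via $\varphi\mapsto\widetilde\varphi$ in Tits' model) to automorphisms of $\e_6$ acting trivially on the $\J$-side, hence giving arbitrary $A$ with $B=I$, $C=0$. For the $B$-factor: automorphisms of $\J=\Mat_3(\F)^+$ induced by permutation matrices (the symmetric group $S_3$ permuting the three diagonal idempotents $E_i$) act on the $\Z^2$-grading of $\J$ and realize the dihedral action $\span{\tau_1,\tau_2}\cong D_3=S_3$ on $\Z^2$; these extend to $\e_6$ fixing the $\der(\C)$-part, giving arbitrary $B\in D_3$ with $A=I$. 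The mixed block $C\in\Mat_{3\times2}(\Z_2)$ is the subtle part: I expect it to arise from automorphisms that simultaneously act on both tensor factors in $\C_0\otimes\J_0$, so that a shift in the $\Z^2$-degree is accompanied by a shift in the $\Z_2^3$-degree — concretely, by composing an octonion grading automorphism with a $\J$-automorphism in a way that the overall effect on $\supp\Gamma_1$ produces a nonzero off-diagonal coupling.

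\textbf{The upper bound.} Conversely, I would show $\W(\Gamma_1)$ is no larger than the stated group. The key structural constraint is the $\Z_2$-grading $\e_6=\f_4\oplus\alb_0$ and, more usefully, the reductive decomposition visible from (\ref{eq_paraverbienlosmodulos}): the identity component $\der(\C)=\g_2$ sits in degree $0$, and the module structure $\C\otimes\J_0$ determines how components multiply. Because $\Gamma_1$ restricted to $\der(\C)$ is the $\Z_2^3$-grading of $G_2$, any $f\in\aut(\Gamma_1)$ must induce on the $\Z_2^3$-part an element of the Weyl group of that $G_2$-grading, which is exactly $\GL_3(\Z_2)$; this bounds $A$. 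The bound $B\in D_3\cong S_3$ should follow because the $\Z^2$-grading on $\J$ is the Cartan/root grading of $\slin_3$-type (its support is a root system of type $A_2$ together with $0$), and any symmetry of $\Gamma_1$ must restrict to a symmetry of this root grading, whose Weyl group is $\aut(A_2)=D_6$ — here I must check that the constraint of preserving the whole $\e_6$-grading (not just the $\J$-part) cuts this down to the $S_3\cong D_3$ generated by $\tau_1,\tau_2$. Finally the bound $C\in\Mat_{3\times2}(\Z_2)$ (rather than $\Mat_{3\times2}(\Z)$) holds because the images must land in $G_1$ whose first three coordinates are $\Z_2$-valued, and the multiplicativity $\alpha_f(s+t)=\alpha_f(s)+\alpha_f(t)$ forces $C$ to be a homomorphism $\Z^2\to\Z_2^3$, automatically factoring through $\Z_2^2$.

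\textbf{The semidirect product structure.} Once both inclusions are established, the isomorphism $\W(\Gamma_1)\cong\Mat_{3\times2}(\Z_2)\rtimes(\GL_3(\Z_2)\times D_3)$ is a routine verification at the level of block-matrix multiplication: the matrices with $C=0$ form the subgroup $\GL_3(\Z_2)\times D_3$ (the $A$- and $B$-blocks multiply independently and commute as they act on complementary factors), while the matrices with $A=I$, $B=I$ form the normal abelian subgroup $\Mat_{3\times2}(\Z_2)$ under addition; the conjugation action $(A,B)\cdot C=A\,C\,B^{-1}$ gives the semidirect product. \textbf{The main obstacle} I anticipate is the honest determination of the mixed block $C$: both showing it is forced to be $\Z_2$-valued and, more importantly, exhibiting automorphisms realizing \emph{every} value of $C$ will require a careful hands-on analysis of how an automorphism of $\e_6$ can couple a $\Z^2$-degree shift on the Jordan side to a $\Z_2^3$-degree shift on the octonion side while still permuting the components of $\Gamma_1$ consistently — this is where the interplay between the two tensor factors in Tits' construction, rather than either factor alone, genuinely enters.
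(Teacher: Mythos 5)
Your overall frame matches the paper's: the same torsion argument for the vanishing lower-left block, the $A$-block realized through the $\GL_3(\Z_2)$ Weyl group of the $\Z_2^3$-grading on $\C$ extended via Tits' construction, the $B$-block realized by the $S_3$ of permutations of the idempotents $E_i$ of $\J$, and the routine semidirect-product bookkeeping at the end. But the two steps you defer are precisely where the paper's proof has all of its content, and your sketch contains no mechanism for either.

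First, the $C$-block. Your proposed source --- ``composing an octonion grading automorphism with a $\J$-automorphism'' so that the two tensor factors couple --- provably cannot produce $C\neq 0$: any automorphism of $\T(\C,\J)$ of the form $\widetilde{\varphi}\,\widetilde{\psi}$ with $\varphi\in\aut(\C)$ and $\psi\in\aut(\J)$ acts factor-wise on the three Tits summands, so it satisfies $(\widetilde{\varphi}\widetilde{\psi})^{-1}f_i(\widetilde{\varphi}\widetilde{\psi})=\widetilde{\varphi^{-1}\varphi_i\varphi}$ and conjugates $T_2$ into $T_2$ exactly, with no $2$-torsion factor $s_{\pm1,\pm1}$ ever appearing; every such automorphism therefore induces a block-diagonal element of $\aut(G_1)$. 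To realize $C\neq0$ one needs $f$ with $f^{-1}f_if=f_i\,s_{\pm1,\pm1}$, which no factor-wise automorphism gives. The paper's essential idea, absent from your plan, is a change of model: it passes to $\mathfrak{L}=\slin(U)\oplus\slin(V)\oplus(U\otimes\bigwedge^3V)$ of Subsection~\ref{subsec_modeloa5masa1}, realizes the grading by explicit matrices $f_1',f_2',f_3',s'_{\alpha,\beta}$ inside $\Sp(U)\times\SL(V)$, and checks that the permutation matrices $I_2\times p_\sigma$ for $\sigma=(1,4)(3,6)$ and $\sigma=(2,5)(3,6)$ satisfy $\varphi f_1'\varphi^{-1}=f_1's'_{-1,1}$ (resp.\ $f_1's'_{1,-1}$) while fixing $f_2'$, $f_3'$ and the torus --- exactly the elementary $C$-matrices. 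Even then a nontrivial transfer is needed to return to $\Gamma_1$: an isomorphism with $\psi^{-1}f_i\psi=f_i'$ exists by uniqueness of the nontoral $\Z_2^3$ together with the already-established transitivity on its nonidentity elements, and $\psi$ must be further corrected, using the newly found $C$-elements themselves, so that it also matches the order-two elements of the two tori.

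Second, the bound $B\in\langle\tau_1,\tau_2\rangle\cong D_3$. You correctly note that support considerations only force $B$ to preserve the $A_2$-configuration, i.e.\ $B\in\aut(A_2)\cong D_6$, and you flag the cut-down to $D_3$ as ``to be checked'' --- but this is not a routine check, and your sketch offers nothing to close it. The danger is concrete: the transpose is an automorphism of $\J\cong\Mat_{3\times3}(\F)^+$ acting as $-I_2$ on the $\Z^2$-grading, and by the extension recipe of Subsection~\ref{subsec_Titsmodel} it extends to $\T(\C,\J)$ commuting with $f_1,f_2,f_3$ and inverting $T_2$, which would induce exactly the excluded element $\diag(I_3,-I_2)$. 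The paper rules this shape out by leaning on the centralizer description quoted in Subsection~\ref{subsec_gradZcuadZ2cubo} from \cite{e6} --- that the centralizer of the nontoral $\Z_2^3$ is $\Z_2^3\times\PSL_3(\F)$, so that an $f$ inducing $\diag(I_3,B)$ is, modulo the stabilizer, the extension of an \emph{inner} automorphism of $\J$, and the image of $\norm_{\PGL_3(\F)}(T_2)$ in $\aut(\Z^2)$ is only $S_3$. Any complete proof must import that centralizer input (and reconcile it with the transpose extension just described, e.g.\ by clarifying in which automorphism group the centralizer is taken); without it, the claimed equality --- as opposed to the inclusion $\supseteq$ --- is simply not established by your argument.
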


\begin{proof}
First, note that the matrix of $\alpha_f$ has necessarily a zero block in the left lower corner, because $\alpha_f$
must apply finite order elements of $G_1$ into finite order elements.

Recall from \cite[Theorem~3.5]{EK11} that the Weyl group of the $\Z_2^3$-grading on $\C$  fills   $\aut(\Z_2^3)$, since the three elements $w_i$'s as in Equation~(\ref{eq_gradZ2cubodeC}) play exactly the same role. Thus this Weyl group can be identified with $\GL_3(\Z_2)$.  Since the automorphisms of $\C$ are extended to $\T(\C,\J)\cong\e_6$ using the Tits' construction and commuting with $T_2$, this implies that any $\left( \begin{array}{cc}
A & 0\\
0 & I_2
\end{array} \right)$ belongs to $\W(\Gamma_1)$.

Now note that the $\Z^2$-grading on $\J=M_3(\F)^+$ induces the $\Z^2$-grading on $\der(\J)\cong\slin_3(\F)$ which is the root decomposition of an algebra of type $A_2$. It is well known that the Weyl group of this grading is
isomorphic to  $D_3$. With our identifications,
if $\sigma\in S_3$, the map $E_{ij}\mapsto E_{\sigma(i)\sigma(j)}$ is an automorphism of the Jordan algebra $\J$. All these automorphisms must fill the Weyl group ($S_3$ is isomorphic to $D_3$), being $\tau_1$ the matrix corresponding to the permutation $(1,2,3)$ (since $E_{12}$ is applied into $E_{23}\in\J_{0,1}$ and    $E_{23}$   into $E_{31}\in\J_{-1,-1}$)
and $\tau_2$ the matrix corresponding to the permutation $(1,2)\in S_3$. Again these automorphisms can be extended to $\e_6$ commuting with $\{f_1,f_2,f_3\}$, appearing the elements $\left( \begin{array}{cc}
I_3 & 0\\
0 & B
\end{array} \right)$ in the Weyl group (and only those elements with such particular shape).

In order to find the remaining elements in the Weyl group, we need to change the model used for describing the grading. Then we
  consider the construction
  $\mathfrak{L} =\mathfrak{sl}(V)\oplus\mathfrak{sl}(W)\oplus (V\otimes\bigwedge^3W)$ given
  in Subsection~\ref{subsec_modeloa5masa1}.
  According to \cite[Section~3.5]{e6}, $\Gamma_1$  is equivalent to $\Gamma'_1$, the $\Z_2^3 \times \Z^2$-grading  on $\mathfrak{L}$ with grading automorphisms:
$$
\begin{array}{l}
 f_1'=\ii\left(\begin{array}{cc} 1&0\\0&-1 \end{array}\right) \times  \zeta\left(\begin{array}{cc}   I_3 & 0 \\ 0 & -  I_3 \end{array}\right), \\
 \text{$f_2'$ given by $\id$ on $\mathfrak{sl}(V)\oplus\mathfrak{sl}(W)$ and $-\id$ on $V\otimes\bigwedge^3W$,} \\
 f_3'=\ii\left(\begin{array}{cc} 0&1\\1&0 \end{array}\right) \times \zeta\left(\begin{array}{cc} 0 &   I_3 \\   I_3 & 0 \end{array}\right), \\
 s_{\alpha,\beta}'=I_2 \times \text{diag}(\alpha,\beta,\alpha^{-1}\beta^{-1},\alpha,\beta,\alpha^{-1}\beta^{-1}), \quad \alpha,\beta\in \F^\times,
\end{array}
$$
\noindent
if we recall that we are considering $\Sp(V)\times \SL(W)\leq\text{Aut}( \mathfrak{L})$.
Note that not only the quasitorus $P'_1=\{s'_{\alpha, \beta}\mid \alpha,\beta\in \F^\times\}\langle  f_1', f_2', f_3' \rangle$ can be considered  conjugated to $P_1$ through the identification, but   it is possible to take an isomorphism $\psi\colon \mathfrak{L} \to\e_6$ in a way such that $\psi^{-1}f_i\psi=f_i'$ for all $i=1,2,3$, due to the fact that a nontoral $\Z_2^3$-subgroup of $\aut(\e_6)$ is unique up to conjugation and that we have already proved that we can change any nonidentity element in that subgroup $\Z_2^3$ by any other one.
These arguments allow to work with the  other model.

For each $\sigma\in S_6$, denote by $p_\sigma\in\GL_6(\F)$ the matrix with $(i,\sigma(i))$  entry equal to $1$   for all $i=1,\dots,6$ and $0$ otherwise. Consider the automorphism given by
$\varphi=I_2\times p_\sigma\in \Sp(V)\times \SL(W)\leq\aut(\mathfrak{L})$ for the permutation $\sigma=(1,4)(3,6)\in S_6$.
It is a trivial verification that
 $$
 \begin{array}{ll}
 \varphi f'_1\varphi^{-1}=f'_1s'_{-1,1},\qquad&
 \varphi f_2'\varphi^{-1}=f_2',\\
 \varphi f_3'\varphi^{-1}=f_3',&
 \varphi s'_{\alpha,\beta}\varphi^{-1}=s'_{\alpha,\beta},
 \end{array}
 $$
which implies that $\varphi$ belongs to $\norm(P'_1)$ and that the induced element by its projection on the Weyl group of the grading is
$$\small{
 \left( \begin{array}{ccccc}
\bar 1 & 0 & 0 & \bar 1 & 0\\
0 & \bar 1 & 0 & 0 & 0\\
0 & 0 & \bar 1 & 0 & 0\\
0 & 0 & 0 & 1 & 0\\
0 & 0 & 0 & 0 & 1
\end{array} \right)}\in\W(\Gamma'_1).
$$
In a similar way, the automorphism $I_2\times p_\sigma$ for $\sigma=(2,5)(3,6)$ applies $f'_1$ into $f'_1s'_{1,-1}$
and does not move $f_2'$, $f_3'$ and   $s'_{\alpha,\beta}$.
As the automorphisms $f'_i$'s are interchangeable,    we find any possible
$\left( \begin{array}{cc}
I_3 & C\\
0 & I_2
\end{array} \right)$ belonging to $\W(\Gamma'_1)$.

It is important to observe that the two-dimensional torus $\psi^{-1}T_2\psi$   commutes with the automorphisms $\psi^{-1}f_i\psi=f_i'$ for all $i$, hence it is equal to $\{s'_{\alpha,\beta}\mid \alpha,\beta\in\F^\times\}$ (the only one satisfying that property), although possibly $\psi^{-1}s_{\alpha,\beta}\psi\ne s'_{\alpha,\beta}$.
Anyway, the map $\psi$ can be chosen verifying besides $\psi^{-1}s_{\alpha,\beta}\psi= s'_{\alpha,\beta}$ for all $\alpha^2=\beta^2=1$, if we compose $\psi$ with an automorphism  of $\mathfrak{L}$ with suitable projection $\left( \begin{array}{cc}
I_3 & C\\
0 & I_2
\end{array} \right)$.   This justifies that the previously found elements in  $\W(\Gamma'_1)$ also belong to $\W(\Gamma_1)$, and so any $\left( \begin{array}{cc}
A & C\\
0 & B
\end{array} \right)$ does.

\end{proof}


\subsection{Weyl group of the $\Z_2^4 \times \Z$-grading}\label{subsec_WeylZ2cuartaZ}

Let $\Gamma_2$  be the $G_2$-grading on $\g(S_8,S_2)\cong\e_6$ described in Subsection~\ref{subsec_gradZZ2cuarta}, for
$G_2= \Z_2\times\Z_2^3 \times \Z$, the first factor $\Z_2$ corresponding to the outer automorphism $\varrho$ coming from $S_2$,  the $\Z_2^3$-grading induced by the three order two automorphisms  $\{F_1,F_2,F_3\}\subset\aut(\e_6)$ extending those ones of the paraoctonion algebra $S_8$, and the $\Z$-grading as in Equation~(\ref{eq_laZgrad}) produced by the torus $\{t_\alpha\mid \alpha\in\F^\times\}$.
Identify any element of $\aut(G_2)$ with its $1+3+1$-block   matrix relative to the basis (\ref{eq_basedelgrupo}).

\begin{prop}
The Weyl group   $\W(\Gamma_2)$ coincides with
$$
\left\{\left(\begin{array}{c|c|c} \bar1&0&b \\ \hline 0&A&D\\ \hline 0&0&c \end{array}\right) \mid b\in\{\bar0,\bar1\},c\in\{\pm1\}, A\in\GL_3(\Z_2),D\in\Mat_{3\times1}(\Z_2)\right\}.
$$
Thus
$$ \W(\Gamma_2)
\cong  \Z_2^4 \rtimes (\GL_3(\Z_2)\times\Z_2).$$

\end{prop}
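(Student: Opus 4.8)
The plan is to prove the two inclusions separately: that every $\alpha_f\in\W(\Gamma_2)\subseteq\aut(G_2)$ has the displayed block shape, and that every block matrix of that shape is realized by some $f\in\norm(P_2)$. Throughout I would work with $\phi=\Int(f)\vert_{P_2}\in\aut(P_2)$, whose contragredient action on $\X(P_2)=G_2$ is exactly $\alpha_f$. First I record the cheap constraints. Since $\alpha_f$ is an automorphism of $G_2=\Z_2\times\Z_2^3\times\Z$, it must preserve the torsion subgroup $\Z_2^4=\span{\varrho,F_1,F_2,F_3}$ (finite-order elements go to finite-order elements), which forces the two lower-left blocks to vanish and the bottom-right entry to be an automorphism $c\in\{\pm1\}$ of $\Z$.

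Next comes the inner/outer analysis, which pins down the shape of the torsion block. The generator of the first $\Z_2$ factor is the character $\pi\in G_2$ whose associated $\Z_2$-grading has even part $\fix\varrho=\f_4$; that is, $\pi$ detects whether an element of $P_2$ is inner or outer. Because $\Int(\e_6)$ is normal in $\aut(\e_6)$, conjugation by $f$ preserves the set of inner elements of $P_2$; as $F_1,F_2,F_3$ are inner (they extend automorphisms of the para-Hurwitz algebra $S_8$) while $\varrho$ is outer, it follows that $\phi$ maps $\span{F_1,F_2,F_3}$ onto itself and carries $\varrho$ into the coset $\varrho\span{F_1,F_2,F_3}$ modulo the torus. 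Dually, $\alpha_f$ fixes $\pi$ and restricts on the torsion part to an upper-triangular matrix $\left(\begin{smallmatrix}\bar1&\ast\\ 0&A\end{smallmatrix}\right)$ with $A\in\GL_3(\Z_2)$. The only point left for necessity is the vanishing of the $(1,2)$-block, which is equivalent to $\phi(\varrho)=\varrho$.

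I expect this last point to be the main obstacle. To establish it I would use that $\phi(\varrho)=f\varrho f^{-1}$ is conjugate to $\varrho$, hence of type $2C$ (fixed subalgebra $\f_4$ of dimension $52$), and then compute, from the explicit eigenspace decomposition~(\ref{eq_laZgrad}) in Elduque's model, the dimension of the fixed subalgebra of each candidate $\varrho F_1^aF_2^bF_3^c$ (and of its product with the order-two element $t_{-1}$ of the torus). The claim to verify is that $\varrho$ is the unique element of type $2C$ in its coset, every other candidate having fixed subalgebra of type $C_4$ (dimension $36$, type $2D$) or of the wrong dimension; preservation of the conjugacy type under $\phi$ then yields $\phi(\varrho)=\varrho$ and kills the $(1,2)$-block. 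This bookkeeping of fixed-point dimensions is the genuinely computational heart of the argument.

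For the reverse inclusion I would exhibit automorphisms realizing the three independent pieces. To obtain an arbitrary $A\in\GL_3(\Z_2)$ I invoke \cite[Theorem~3.5]{EK11}, by which the Weyl group of the $\Z_2^3$-grading~(\ref{eq_gradZ2cubodeC}) already fills $\aut(\Z_2^3)\cong\GL_3(\Z_2)$; extending these automorphisms of $S_8$ through the construction $(f,\id)$ of Subsection~\ref{subsec_Elduquemodelo} gives elements of $\aut(\e_6)$ that fix the paraunit, hence commute with $d=\ad(\iota_0(1\otimes1'))$ and with $\varrho$, thereby realizing $\diag(\bar1,A,1)$. To reach $c=-1$ I would produce an automorphism reversing the $\Z$-grading, i.e. carrying $d$ to $-d$, which one verifies can be chosen to centralize $\span{\varrho,F_1,F_2,F_3}$. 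Finally, the shift subgroup of matrices $\left(\begin{smallmatrix}\bar1&0&b\\0&I_3&D\\0&0&1\end{smallmatrix}\right)$ is produced by normalizer elements conjugating each order-two generator of $P_2$ to its product with $t_{-1}$, written down explicitly in the spirit of the elements $I_2\times p_\sigma$ used in the proof of the previous proposition. Assembling the pieces, the realized matrices exhaust the set in the statement; identifying the shift matrices as a normal $\Z_2^4$ and the block-diagonal matrices $\diag(\bar1,A,c)$ as a complement isomorphic to $\GL_3(\Z_2)\times\Z_2$ gives the decomposition $\W(\Gamma_2)\cong\Z_2^4\rtimes(\GL_3(\Z_2)\times\Z_2)$.
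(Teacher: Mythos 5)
Your skeleton matches the paper's proof quite closely: the torsion-preservation and inner/outer constraints that produce the zero blocks and the $\bar1$ in the corner, a conjugacy-type computation to kill the $(1,2)$-block, and explicit realizations for the other direction. Two points need attention.

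First, your claimed equivalence ``vanishing of the $(1,2)$-block $\Leftrightarrow\phi(\varrho)=\varrho$'' is wrong, and in fact inconsistent with your own sufficiency step: the $(1,3)$-entry $b$ records a possible torus factor, so the $(1,2)$-block vanishes if and only if $\phi(\varrho)\in\{\varrho,\varrho t_{-1}\}$, and $b=\bar1$ \emph{is} realized (the paper's automorphism $\Psi_0$ satisfies $\Psi_0^{-1}\varrho\Psi_0=\varrho t_{-1}$). Consequently $\varrho$ is \emph{not} the unique type-$2C$ element among the candidates once $t_{-1}$-twists are included: $\varrho t_{-1}$ is conjugate to $\varrho$, hence also of type $2C$. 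Your dimension bookkeeping, if carried out, would reveal exactly this; the statement you actually need is that $\varrho F t_{-1}^{\epsilon}$ has type $2D$ for every $\id\neq F\in\span{F_1,F_2,F_3}$, which is what the paper computes: $\dim\fix(\varrho F)=24+12=36$ (fixed part of $\der(\alb)$ plus antifixed part of $\alb_0$), a subalgebra of type $C_4$ rather than $F_4$. Note also that the paper only needs the case $\epsilon=0$: once $\Psi_0$ is in hand, composing $f$ with $\Psi_0$ removes any $t_{-1}$-factor from $f^{-1}\varrho f$ (as $\Psi_0$ commutes with the $F_i$ and with the torus), so ruling out $\varrho\sim\varrho F$ suffices.

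Second, your sufficiency direction is organized genuinely differently, and two of your three constructions are asserted rather than exhibited. The paper obtains the whole block $\left(\begin{smallmatrix}\bar1&0&0\\0&A&D\\0&0&c\end{smallmatrix}\right)$ --- arbitrary $A$, $D$ \emph{and} $c$ simultaneously --- in one stroke: the restriction of $\Gamma_2$ to $\fix\varrho\cong\f_4$ is the fine $\Z_2^3\times\Z$-grading whose Weyl group is all of $\aut(\Z_2^3\times\Z)$ by \cite[Theorem~4.6]{EK11} (transported through $\aut(\alb)\cong\aut(\f_4)$), and every automorphism of $\f_4$ in that Weyl group extends to $\e_6=\der(\alb)\oplus\alb_0$ commuting with $\varrho$. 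Your piecewise route --- octonion Weyl group via \cite[Theorem~3.5]{EK11} for $A$ (that part is fine, and your extension through $(f,\id)$ fixing $\iota_0(1\otimes 1')$ does commute with $d$ and $\varrho$); an unspecified grading-reversing automorphism centralizing $\span{\varrho,F_1,F_2,F_3}$ for $c=-1$; unspecified shift elements for $D$ and $b$ ``in the spirit of $I_2\times p_\sigma$'', which belong to a different model --- leaves open precisely the constructions that are hard to write down in Elduque's model. Such elements do exist, but the paper avoids constructing them by importing $(A,D,c)$ wholesale from the Albert algebra result, so that only the single generator $\Psi_0$ (an explicit order-$4$ map twisting the $S_2$-slot by $\eta\colon e_1\mapsto e_1$, $e_2\mapsto-e_2$) has to be produced by hand for the $b$-entry. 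If you want to keep your route, you must actually exhibit the $c=-1$ element and the $D$-shifts; otherwise the reduction to $\fix\varrho\cong\f_4$ is the efficient fix.
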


\begin{proof}
First note that the restriction of our grading to the subalgebra $\fix\varrho\cong\f_4$ gives a   $\Z_2^3 \times \Z$-fine grading
on $\f_4$. As the algebraic groups $\aut(\alb)$ and $\aut(\f_4)$ are isomorphic, we can apply the results in \cite[Theorem~4.6]{EK11} about the Weyl group of the fine $\Z_2^3 \times \Z$-grading on $\alb$ to guarantee that the Weyl group of the corresponding grading on $\f_4$ is the whole group $\aut( \Z_2^3 \times \Z)$. This implies that
 all the elements of the form
$\left(\begin{array}{c|c|c} \bar1&0&0 \\ \hline 0&A&D\\ \hline 0&0&c \end{array}\right)$ belong to $\W(\Gamma_2)$. 

Notice that the torsion subgroup $\Z_2^4$ of $G_2$ is preserved by the action of $\W(\Gamma_2)$, so the zero blocks of the third row must appear.
Observe also that if $(a_{ij})$ is the matrix of $\alpha_f\in\aut(G_2)$ related to $f\in\aut(\Gamma_2)$, not only this automorphism $f$ belongs to the normalizer of the MAD-group producing the grading, but we can specify its action:
$$
\begin{array}{l}
f^{-1}\varrho f=\varrho^{a_{11}}F_1^{a_{12}}F_2^{a_{13}}F_3^{a_{14}}t_{-1}^{a_{15}},\\
 f^{-1}F_{i-1} f=\varrho^{a_{i1}}F_1^{a_{i2}}F_2^{a_{i3}}F_3^{a_{i4}}t_{-1}^{a_{i5}},\qquad \forall i=2,3,4,\\
 f^{-1}t_\alpha f=t_{\alpha^c},\qquad c=a_{55}\in\{\pm1\},\forall\alpha\in\F^\times.
 \end{array}
 $$
 In particular, as $f$ applies inner automorphisms into inner automorphisms, then the zero blocks of the first column appear. Furthermore, in order to prove that the block located in $(1,2)$-position is zero, it is enough to check that $\varrho$ is not conjugated to $\varrho F$ for any $F\in\span {F_1,F_2,F_3}$. Indeed, if we think of $\e_6$ as $\fix\varrho\cong\der(\alb)$ direct sum $\text{antifix}\,\varrho\cong\alb_0$, note that   such an $F$ comes from an order two automorphism of the octonion algebra (fixing a quaternion algebra $Q$), first extended to the Albert algebra (fixing $\mathcal{H}_3(Q )$, of dimension $15$) and then to $\f_4$ (fixing  $C_3$, seen as $\der(\mathcal{H}_3(Q ))$, direct sum with an ideal of type $A_1$, 
 so of dimension $24$). Thus $F$ is of type $2A$, and, what is more useful for us: the subalgebra fixed by $\varrho F$ has dimension $24+12$ (the fixed part of $\der(\alb)$ and the antifixed part of $\alb_0$), so that it is of type $C_4$ instead of $F_4$, as we were proving.

For ending the proof, we need some other automorphisms. We consider again the model $\g(S_8,S_2)\cong\e_6$ and take  the following maps,
\begin{equation}\label{eq_losautomorfismosdeDiego}
\begin{array}{rccl}
\Psi_i\colon  &\e_6 & \longrightarrow & \e_6 \\
&\iota_i(x\otimes y) & \longmapsto & \iota_i(x\otimes y)\\
&\iota_{i+1}(x\otimes y) & \longmapsto & \ii \iota_{i+1}(x\otimes \eta(y))\\
&\iota_{i+2}(x\otimes y) & \longmapsto & -\ii \iota_{i+2}(x\otimes \eta(y))\\
&t & \longmapsto & t
\end{array}
\end{equation}
for every $t\in \mathfrak{tri}(S_8) \oplus \mathfrak{tri}(S_2)$ and for all $x\in S_8$, $y\in S_2$, $i=0,1,2$, where $\eta\colon S\rightarrow S$ is the linear map defined by $\eta(e_1)=e_1$ and $\eta(e_2)=-e_2$. It is a straightforward computation that the maps $\Psi_i$'s are indeed order $4$ automorphisms of $\e_6$. Observe that
  $\Psi_0$ belongs to $\aut(\Gamma_2)$, since it preserves the $\Z$-grading, it commutes with all the $F_i$'s, and it verifies $\Psi_0^{-1}\varrho\Psi_0=\varrho t_{-1}$ (so it interchanges $L_{(\bar0,*)}$ with $L_{(\bar1,*)}$). Then  its projection on $\W(\Gamma_2)$ is
$$
\small{\left( \begin{array}{ccc}
\bar1 & 0 & \bar1\\
0 & I_3 & 0\\
0 & 0 &  1
\end{array} \right), }
$$
what finishes the proof.

\end{proof}


\subsection{Weyl group of the $\Z_2 \times \Z^4$-grading}\label{subsec_WeylZ2Zcuarta}

Let $\Gamma_3$  be the $G_3=\Z_2 \times \Z^4$-grading on $\g(S_8,S_2)$ described in Subsection~\ref{subsec_gradZcuartaZ2}.
The MAD-group producing the grading is $\Diag(\Gamma_3)=P_3=\span{\{\varrho\}\cup T_4}$.
Identify any element of $\aut(G_3)$ with its  $1+4$-block matrix relative to the canonical basis (\ref{eq_basedelgrupo}).

\begin{prop}
The Weyl group   $\W(\Gamma_3)$ coincides with
\begin{equation}\label{eq_weyldegamma3}
 \W(\Gamma_3) = \left\{ \left(\begin{array}{c|c} \bar1& \begin{array}{cccc} a&b&a&b \end{array} \\ \hline 0&A \end{array}\right) \mid   A\in\W_{\f_4},\,   a,b\in\Z_2  \right\},
 \end{equation}
where $\W_{\f_4}$ is the Weyl group of the Cartan grading of $\f_4$, generated by
$$\small{
s_1=\left(
\begin{array}{cccc}
 0 & -1 & 1 & -1 \\
 1 & -1 & 1 & 0 \\
 0 & 0 & 1 & 0 \\
 0 & 0 & 0 & 1
\end{array}
\right),\,s_2=\left(
\begin{array}{cccc}
 -1 & 1 & 0 & -1 \\
 0 & 1 & 0 & 0 \\
 0 & 0 & 1 & 0 \\
 0 & 0 & 0 & 1
\end{array}
\right),}
$$
$$\small{
s_3=\left(
\begin{array}{cccc}
 0 & 0 & 1 & -1 \\
 0 & 1 & 0 & 0 \\
 1 & 0 & 0 & 1 \\
 0 & 0 & 0 & 1
\end{array}
\right),\,s_4=\left(
\begin{array}{cccc}
 1 & 0 & -1 & 1 \\
 0 & 1 & -1 & 0 \\
 0 & 0 & 0 & -1 \\
 0 & 0 & 1 & -1
\end{array}
\right).}
$$
Thus
$$
\W(\Gamma_3)
\cong  \Z_2^2\rtimes  \W_{\f_4} \cong\Z_2^2\rtimes( (\Z_2^3\rtimes S_4)\rtimes S_3 ),
$$
where $S_n$ denotes the symmetric group of $n$ elements.
 \end{prop}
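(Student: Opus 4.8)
The plan is to identify $\W(\Gamma_3)$ as a subgroup of $\aut(G_3)$ with $G_3=\Z_2\times\Z^4$, pinning down separately the lower‑right block $A$ and the upper‑right block $v$ of a matrix $\left(\begin{smallmatrix}\bar1 & v^{t}\\ 0 & A\end{smallmatrix}\right)$. The block shape is automatic: $\aut(\Z_2)$ is trivial and the torsion subgroup $\Z_2\times\{0\}$ is characteristic, so every element of $\aut(G_3)$ has this form with $v\in\Z_2^4$, $A\in\GL_4(\Z)$. To bound $A$, I would use that the identity component of $P_3$ is the torus $T_4$, hence characteristic in $P_3$; thus each $f\in\norm(P_3)$ normalises $T_4$ and permutes the components of the coarser $\Z^4$‑grading produced by $T_4$, inducing precisely the block $A$. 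Since that $\Z^4$‑grading is a root grading for $F_4$ (Subsection~\ref{subsec_gradZcuartaZ2}), its support is a copy of $\Phi_{F_4}$, which $A$ must preserve; as $F_4$ has no length‑preserving diagram symmetry, $\aut(\Phi_{F_4})=\W(\Phi_{F_4})=\W_{\f_4}$, whence $A\in\W_{\f_4}$.

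Next I would constrain $v$. Because $T_4$ is characteristic and $P_3/T_4\cong\Z_2$, conjugation yields $f\varrho f^{-1}=\varrho\,t'$ with $t'\in T_4$; as $\varrho$ and $t'$ commute and $\varrho^2=\id$, we get $t'\in\{\pm1\}^4$, and a direct character computation on $\X(P_3)$ identifies $v$ with the exponent vector $\bar t'\in\Z_2^4$. Since $\varrho\,t'$ is conjugate to $\varrho$ it is again of type $2C$, i.e. $\dim\fix(\varrho t')=52$. I would compute this dimension from the explicit shape of $\Gamma_3$ (type $(72,1,0,1)$): decomposing $\e_6=\f_4\oplus V_{26}$ with $\varrho=\pm\id$ on the two summands, the degree‑zero pieces contribute $4$, each of the $24$ short roots contributes exactly $1$ (one of its two $\Z_2$‑lifts survives), and each long root contributes $1$ precisely when $\bar t'\perp\alpha$ mod $2$. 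Hence $\dim\fix(\varrho t')=28+\#\{\text{long }\alpha:\langle\bar t',\alpha\rangle=\bar0\}$, equal to $52$ iff $\bar t'$ is orthogonal to every long root. Reducing the long ($D_4$) roots modulo $2$ in the coordinates of Subsection~\ref{subsec_gradZcuartaZ2} shows they span a $2$‑dimensional subspace whose orthogonal complement is exactly $U_0:=\{(a,b,a,b):a,b\in\Z_2\}$. Thus $v\in U_0$ and $\W(\Gamma_3)\subseteq U_0\rtimes\W_{\f_4}$.

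For the reverse inclusion I would realise the two factors separately. For the $\W_{\f_4}$‑block (with $v=0$): since $F_4$ has no outer automorphisms, $\aut(\f_4)$ is connected, and restriction to $\fix\varrho=\f_4$ maps $\Cent_{\aut(\e_6)}(\varrho)^{\circ}\cong F_4$ isomorphically onto $\aut(\f_4)$ (its kernel on $\f_4$ is $\langle\varrho\rangle$, which meets the connected group trivially). As $T_4\subset\Cent(\varrho)^{\circ}$ is a maximal torus of this $F_4$, the normaliser $\norm_{F_4}(T_4)$ realises all of $\W_{\f_4}$ by automorphisms commuting with $\varrho$ and normalising $T_4$, i.e. lying in $\norm(P_3)$ and inducing $\left(\begin{smallmatrix}\bar1&0\\0&A\end{smallmatrix}\right)$ for every $A\in\W_{\f_4}$. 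For the $U_0$‑block (with $A=I$) I would move $\varrho$ inside $\Cent_{\aut(\e_6)}(T_4)$: its identity component is a maximal torus $T_6$ whose Lie algebra is the six‑dimensional degree‑zero space of the $\Z^4$‑grading, and $\varrho$ acts on $T_6$ as an involution fixing $T_4$; conjugating $\varrho$ by elements of the $(-1)$‑eigensubtorus (which centralise $T_4$, so $A=I$) gives $\varrho\,t'$ for the required $t'$, producing $\left(\begin{smallmatrix}\bar1&(a,b,a,b)\\0&I\end{smallmatrix}\right)$. Alternatively these can be written down explicitly in the model $\g(S_8,S_2)$, in the spirit of the maps $\Psi_i$ of Subsection~\ref{subsec_WeylZ2cuartaZ}.

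Finally, $U_0$ is invariant under the $\W_{\f_4}$‑action because $\W(F_4)$ preserves root lengths, hence the span of the long roots and its complement; this gives the semidirect product $\W(\Gamma_3)\cong U_0\rtimes\W_{\f_4}\cong\Z_2^2\rtimes\W(F_4)$, and the classical decomposition $\W(F_4)\cong\W(D_4)\rtimes S_3\cong(\Z_2^3\rtimes S_4)\rtimes S_3$ completes the identification. I expect the main obstacle to be the realisation of the $U_0$‑factor: bounding $v$ needs only the invariant $\dim\fix$, but genuinely producing automorphisms that send $\varrho$ to each $\varrho\,t'$ while fixing $T_4$ requires controlling the $2$‑torsion overlap of $T_4$ with the $(-1)$‑eigentorus of $\varrho$ in $T_6$ (equivalently, verifying by an explicit model computation that the exhibited maps normalise $P_3$ with the stated projection). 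The other point demanding care is confirming that the coarse $\Z^4$‑grading has support exactly $\Phi_{F_4}$ with dimension pattern $(72,1,0,1)$, so that the fixed‑point count above is exact.
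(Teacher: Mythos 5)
Your proposal is correct in substance and, on the two genuinely hard points, lands on the same ideas as the paper; where it diverges is in how you pin down the block $A$. You argue directly on the coarsened $\Z^4$-grading: $T_4$ is the identity component of $P_3$, hence characteristic, so any $f\in\norm(P_3)$ permutes the $T_4$-homogeneous components via $A$, and since the support of that coarsening is $\Phi_{F_4}\cup\{0\}$ (the root grading of Subsection~\ref{subsec_gradZcuartaZ2}), $A$ is a lattice automorphism stabilizing $\Phi_{F_4}$; as every linear automorphism of a root system lies in the Weyl group extended by diagram automorphisms, and $F_4$ has none, $A\in\W_{\f_4}$. The paper instead establishes $A\in\W_{\f_4}$ only for elements with $v=0$ (commuting with $\varrho$ forces $f$ to restrict to $\fix\varrho\cong\f_4$), and the general case is handled implicitly by first realizing all translations $v\in U_0$ and composing; your route is more self-contained and treats arbitrary $v$ at once, at the price of invoking the support identification and the Bourbaki fact $\aut(\Phi_{F_4})=\W(F_4)$. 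For the bound $v\in U_0=\{(a,b,a,b)\}$ your argument is the paper's in different bookkeeping: both show $\varrho t'$ cannot be conjugate to $\varrho$ (type $2C$, $\dim\fix=52$) unless $\bar t'$ annihilates all long roots mod $2$. The paper computes $\dim \fix(\varrho t)= \dim \fix(t |_{ \mathfrak{tri} (S_8)})+24$ and exhibits the witnesses $t_{v_2,v_3}\in L_{(\bar 0,1,0,1,0)}$ and $t_{e_2,u_2}\in L_{(\bar 0,0,1,0,1)}$; your count $\dim\fix(\varrho t')=28+\#\{\text{long }\alpha:\langle\bar t',\alpha\rangle=\bar0\}$ in the $\f_4\oplus V_{26}$ picture is equivalent, and your claim that the long roots reduce mod $2$ to a $2$-dimensional subspace with orthogonal complement exactly $U_0$ agrees with the paper's two witnesses $(1,0,1,0)$ and $(0,1,0,1)$.

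The one soft spot — which you correctly flag yourself — is your primary realization of the $U_0$-translations by conjugating $\varrho$ inside the $(-1)$-eigensubtorus $T_-$ of $T_6=\Cent(T_4)^\circ$: conjugation by $s\in T_-$ sends $\varrho\mapsto\varrho s^{-2}$ but normalizes $P_3$ only when $s^2\in T_4$, so this method produces exactly the translations in $T_-\cap T_4$, a subgroup of $\Z_2^2$ that a priori could be proper; without computing that $2$-torsion overlap the argument realizes possibly fewer than four cosets. Your fallback is precisely the paper's actual proof: the order-four maps $\Psi_0,\Psi_1,\Psi_2$ of Equation~(\ref{eq_losautomorfismosdeDiego}) belong to $\aut(\Gamma_3)$ and project to the rows $(\bar0,\bar1,\bar0,\bar1)$, $(\bar1,\bar0,\bar1,\bar0)$, $(\bar1,\bar1,\bar1,\bar1)$, realizing all of $U_0$ with $A=I_4$; to complete your write-up you need only verify these projections rather than "in the spirit of". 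Everything else — the block shape from the characteristic torsion subgroup, the realization of $\W_{\f_4}$ through $\Cent_{\aut(\e_6)}(\varrho)^{\circ}\cong F_4$ with $T_4$ as maximal torus, and the semidirect structure from the $\W_{\f_4}$-invariance of $U_0$ — matches the paper.
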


\begin{proof}
If $f\in\aut(\Gamma_3)$ and $\alpha_f$ denotes its related matrix, it is clear that the first column must be as claimed, since the torsion subgroup $\Z_2$ of $G_3$ is invariant by automorphisms.

 Recall that the $\Z_2$-grading produced by $\varrho$ allows to identify $\g(S_8,S_2)$ with $\e_6=\der(\alb)\oplus\alb_0$  in such a way that
 the restriction of $T_4$ is the Cartan grading of $\der(\alb)=\f_4$ and
 any    automorphism of $\der(\alb)$ can be  extended to $\e_6$ commuting with $\varrho$.
 In particular we extend the automorphisms of $\f_4$ which normalize the maximal torus, so that
   the elements
 $\left(\begin{array}{c|c} \bar1& 0 \\ \hline 0&A \end{array}\right)$ belong to $\W(\Gamma_3)$ for all $A\in\W_{\f_4}$. Conversely, if an element in  $\W(\Gamma_3)$  has such a particular shape, then the block $A$ necessarily belongs to $\W_{\f_4}$, taking into account that commuting with $\varrho$ forces any automorphism of $\e_6$ to be an extension of an automorphism of $\f_4$. The generators $\{s_i\mid i=1,\dots,4\}$ of $\W_{\f_4}$ that we have chosen are,
 respectively, the ones related to the automorphisms $\psi_{(1,2,3)}$,  $\psi_{(2,3)}$, $\psi_c$ and the extension of $\tau$ described in \cite[Theorem~4.2]{EK11} and with the notations used there.

The  automorphisms $\Psi_i$ considered in Equation~(\ref{eq_losautomorfismosdeDiego}) belong to $\aut(\Gamma_3)$ for all $i=0,1,2$, being their projections in $\W(\Gamma_3)$:
$$\tiny{
\alpha_{\Psi_0}=\left( \begin{array}{c|c}
\bar 1 & \begin{array}{cccc}\bar0 & \bar1 & \bar0 & \bar1\end{array}\\
\hline
0&I_4
\end{array} \right),
\alpha_{\Psi_1}=\left( \begin{array}{c|c}
\bar 1 & \begin{array}{cccc}\bar1 & \bar0 & \bar1 & \bar0\end{array}\\
\hline
0&I_4
\end{array} \right),
\alpha_{\Psi_2}=\left( \begin{array}{c|c}
\bar 1 & \begin{array}{cccc}\bar1 & \bar1 & \bar1 & \bar1\end{array}\\
\hline
0&I_4
\end{array} \right).}
$$
Therefore, all the elements of the right side in Equation~(\ref{eq_weyldegamma3}) belong to the Weyl group.

To prove the equality now is equivalent to prove that there is not $f\in\norm(P_3)$ such that
$f^{-1}\varrho f=\varrho t$ for $t=t_{(-1)^{b_1},(-1)^{b_2},(-1)^{b_3},(-1)^{b_4}}\in T_4$  with $b_i\in \{0,1\}$
when $b_1\neq b_3$ or $b_2\neq b_4$. Recall that $t$ acts in $L_{(\bar n_0,n_1,n_2,n_3,n_4)}$ with eigenvalue $(-1)^{b_1n_1+b_2n_2+b_3n_3+b_4n_4}$.
But observe that $\varrho$ is not even conjugated to $\varrho t$, since the latter automorphism has type $2D$.
Indeed, the subalgebra fixed by $\varrho t $ is
$$
\begin{array}{rl}
\fix(\varrho t)=&\fix t\vert_{\mathfrak{tri} (S_8)}\oplus (\bigoplus_{i=0}^{2}\iota_i(S_8\otimes 1))\cap \text{fix}(t)\\
& \oplus (\bigoplus_{i=0}^{2}\iota_i (S_8 \otimes (e_1-e_2)))\cap \text{antifix}(t).
\end{array}
$$
If $x\in B$, being $B$ the canonical basis of $S_8$, $t$ acts in $\iota_i (x\otimes s)$ with eigenvalue  either $1$ or $-1$ independently of the considered element $s\in S_2$, so that just one of the two elements $\iota_i (x\otimes 1)$
and $\iota_i (x\otimes (e_1-e_2))$ is fixed by $\varrho t$.
Hence $\dim \fix(\varrho t)= \dim \fix(t |_{ \mathfrak{tri} (S_8)})+24$,
 which will be different from $52$ whenever $t|_{\mathfrak{tri} (S_8) }\ne\id$.
But if $b_1\neq b_3$, $t$ acts on $t_{v_2,v_3}\in L_{(\bar 0,1,0,1,0)}$ with eigenvalue $-1$, and if $b_2\neq b_4$, $t$ acts on $t_{e_2,u_2}\in L_{(\bar  0,0,1,0,1)}$ with eigenvalue $-1$, proving our assertion.
\end{proof} 


\subsection{Weyl group of the $\Z_3^2 \times \Z^2$-grading}\label{subsec_WeylZ3cuadZcuad}

In   Subsection~\ref{subsec_gradZcuadZ3cuad} we described the grading  $\Gamma_4$ as the one produced by the MAD-group $P_4=\langle h_1, h_2, \T_2  \rangle\le\aut(\T(\C,\J))$.
We work with $2+2$-block matrices relative to the canonical basis of $G_4=\Z_3^2 \times \Z^2$.

\begin{prop}
The Weyl group   $\W(\Gamma_4)$ coincides with
\begin{equation}\label{eq_elgrupodeweyldelostreses}
\left\{\left(\begin{array}{c|c} A& \begin{array}{cc} a&a\\b&b \end{array} \\ \hline 0&B \end{array}\right)\mid A\in\GL_2(\Z_3),B\in\langle \sigma,\tau\rangle,
a,b\in\mathbb{Z}_3\right\}
\end{equation}
where
$$
\sigma=\left(\begin{array}{cc}1&-1\\1&0\end{array}\right) \quad \text{and} \quad \tau=\left(\begin{array}{cc}1&-1\\0&-1\end{array}\right)
 $$
 generate the dihedral group $D_6$.
Thus
$$ \W(\Gamma_4)
\cong  \Z_3^2 \rtimes (\GL_2(\mathbb{Z}_3) \times D_6).
$$
\end{prop}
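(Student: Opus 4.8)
The plan is to identify $\W(\Gamma_4)$ with a subgroup of $\aut(G_4)$, $G_4=\Z_3^2\times\Z^2$, and to determine the four blocks of a $2+2$ matrix $\left(\begin{smallmatrix}A&C\\E&B\end{smallmatrix}\right)$ written in the canonical basis. First I would record the general constraints. Since the torsion subgroup $\Z_3^2$ of $G_4$ is characteristic, every $\alpha_f$ preserves it, so the lower-left block $E$ vanishes and $A\in\GL_2(\Z_3)$; the images of the free generators may only pick up a torsion component in $\Z_3$, whence $C\in\Mat_{2\times2}(\Z_3)$ and $B\in\GL_2(\Z)$. The real work is then to cut $B$ down to $D_6$, to force the special shape of $C$, and to realize every admissible matrix.

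The structural input is the projection of $\supp\Gamma_4$ onto the free part $\Z^2$. Using the realization $\e_6\cong\der(\C)\oplus(\C\otimes\J_0)$ of Equation \eqref{eq_paraverbienlosmodulos}, the $\Z^2$-degrees coming from $\der(\C)$ are exactly the twelve roots of $G_2$, while those coming from $\C\otimes\J_0$ are the six nonzero Cartan degrees of $\C$, i.e.\ the weights of $\C_0$, which are precisely the six \emph{short} roots of $G_2$. Hence the free support is the $G_2$ root system, and since $B$ permutes it, $B\in\aut(\Phi_{G_2})=D_6=\span{\sigma,\tau}$. The decisive asymmetry is that the \emph{long} roots occur only inside $\der(\C)$, at torsion degree $(\bar0,\bar0)$, with no companion in $\C\otimes\J_0$. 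Therefore, for a long root $r$, the component of free-degree $r$ must be sent to the component of free-degree $Br$ (again long), whose only admissible torsion degree is $(\bar0,\bar0)$; thus $Cr=(\bar0,\bar0)$ for every long root $r$. As every long root is a multiple of $(2,1)$ modulo $3$, this reduces to $C\,(2,1)^t=0$, i.e.\ $c_{12}=c_{11}=:a$ and $c_{22}=c_{21}=:b$, which is exactly the asserted shape $\left(\begin{smallmatrix}a&a\\b&b\end{smallmatrix}\right)$.

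For the converse I would realize each admissible block in turn. The block $A$: the Weyl group of the Pauli $\Z_3^2$-grading on $\J=\Mat_3(\F)^+$ is the full $\GL_2(\Z_3)$, the inner automorphisms contributing the symplectic $\SL_2(\Z_3)$ and the transpose (an automorphism of the Jordan, though not of the associative, algebra) contributing determinant $-1$; compare \cite{EK11}. Extended through Tits' construction, commuting with the octonionic data, these yield all $\left(\begin{smallmatrix}A&0\\0&I\end{smallmatrix}\right)$. The block $B$: the automorphisms of $\C$ normalizing its Cartan grading realize the Weyl group of $G_2$, namely $D_6$, and extend through Tits' construction commuting with $\J$, giving all $\left(\begin{smallmatrix}I&0\\0&B\end{smallmatrix}\right)$, with $\sigma,\tau$ the rotation and reflection generating $D_6$. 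Both families produce $C=0$, and their products stay block-diagonal.

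The genuine difficulty, and the final step, is to realize the shifts $\left(\begin{smallmatrix}I&C\\0&I\end{smallmatrix}\right)$ with $C=\left(\begin{smallmatrix}a&a\\b&b\end{smallmatrix}\right)\neq0$. No tensor-type extension $\widetilde{g_\C}\,\widetilde{g_\J}$ can do this, as such maps are always block-diagonal on $G_4$; a shift must genuinely interchange parts of $\der(\C)$ with parts of $\C\otimes\J_0$ (it sends a short-root space of $\g_2$, sitting at torsion $(\bar0,\bar0)$, to a space $\C_s\otimes(\J_0)_{(\bar a,\bar b)}$). I would produce such automorphisms by exploiting that the finite subgroup $\span{h_1,h_2}$ is \emph{not} conjugate to Adams' $\span{H_1,H_2}$, as recorded in \cite{e6}: passing between the two realizations of $P_4$ furnishes a normalizing automorphism fixing $h_1,h_2$ while conjugating the torus $\T_2$ into $\T_2\span{h_1,h_2}$, whose induced map on $G_4$ is a nonzero shift. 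One then checks directly, via the products in \eqref{eq_paraverbienlosmodulos}, that it lies in $\norm(P_4)$ and induces $\left(\begin{smallmatrix}I&C\\0&I\end{smallmatrix}\right)$; since the functional $n\mapsto n_1+n_2 \bmod 3$ is $D_6$-semiinvariant, this $\Z_3^2$ of shifts is normalized by the block-diagonal part. Assembling the three families and multiplying block matrices then yields $\W(\Gamma_4)\cong\Z_3^2\rtimes(\GL_2(\Z_3)\times D_6)$. I expect this shift realization to be the main obstacle, both in exhibiting an explicit normalizing automorphism and in verifying that it induces the claimed $C$.
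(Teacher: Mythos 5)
Your upper-bound half is correct and follows a genuinely different route from the paper. You force $B\in D_6$ and the shape of $C$ in one stroke from the support: the free part of $\supp\Gamma_4$ is $\Phi_{G_2}\cup\{0\}$, the long roots occur only at torsion degree $(\bar0,\bar0)$, and mod $3$ they span the line killed by $n_1+n_2$, whence $C\left(\begin{smallmatrix}2\\1\end{smallmatrix}\right)=0$. The paper instead obtains $B\in D_6$ from the centralizer $\Cent\langle h_1,h_2\rangle\cong\langle h_1,h_2\rangle\times\aut(\C)$, and forces the shape of $C$ by a conjugacy-class computation: it lists the eigenvalues of $t_{a,b}$ on $\der(\C)$, sorts the order-$3$ elements of $\T_2$ into types $3B$/$3C$ by fixed-subalgebra dimension, shows $h_it$ is of type $3D$ only for $t\in\{\id,t_{\omega,\omega},t_{\omega^2,\omega^2}\}$, and concludes $a=b$, $c=d$ since $\left(\begin{smallmatrix}I_2&C\\0&I_2\end{smallmatrix}\right)$ forces $h_1\sim h_1t_{\omega^{2a},\omega^{2b}}$. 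Your combinatorial argument is simpler and also covers arbitrary $B$ directly; your realizations of the diagonal blocks (Pauli Weyl group of $\J$, Weyl group of the Cartan grading on $\C$, both extended through Tits' construction) coincide with the paper's, as does your closing semiinvariance remark (the admissible $C$'s are stable under $C\mapsto ACB^{-1}$ because $B^{\pm1}$ preserves the long-root line mod $3$).

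The genuine gap is the realization of the shifts, and your sketch misdescribes the object you need to construct. An automorphism of the algebraic group $P_4$ maps the identity component $\T_2$ \emph{onto} $\T_2$, so no normalizing automorphism can "conjugate $\T_2$ into $\T_2\langle h_1,h_2\rangle$"; and an element inducing $\left(\begin{smallmatrix}I&C\\0&I\end{smallmatrix}\right)$ with $C\neq0$ cannot fix $h_1,h_2$ — dually to the matrix, it must \emph{centralize} $\T_2$ and send $h_i\mapsto h_it_i$ with $t_i$ an order-$3$ element of $\T_2$ determined by the $i$th row of $C$ (this is exactly how the paper reads off $h_1\sim h_1t_{\omega^{2a},\omega^{2b}}$). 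So the automorphism you propose to hunt for does not exist, while the one you actually need has the opposite behaviour. Moreover, the step you defer to a direct check is precisely where the paper's work lies: in Adams' model one takes $\psi=\Psi(p)$ for $p$ the cyclic permutation matrix, which commutes with $H_1,H_2$ and permutes the torus, but the shift only appears after constructing an identification $\Upsilon\colon\L\to\T(\C,\J)$ matching $h_1$ with $H_1'=H_1T_{\xi,\xi}$ (with $\xi^9=1$) and $h_2$ with $H_2$, after reparametrizing the torus by $(\F^\times)^2/\langle(\omega,\omega)\rangle\cong(\F^\times)^2$ to absorb $\ker\Psi$; proving $\Upsilon$ exists uses the classification of the type-$3D$ elements of $\mathcal{Q}_2$ and adjustments by normalizer elements and powers of $\psi$. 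Note also that the element so produced has matrix with $C_0=\left(\begin{smallmatrix}\bar1&\bar1\\\bar0&\bar0\end{smallmatrix}\right)$ but $B=\left(\begin{smallmatrix}-1&1\\-1&0\end{smallmatrix}\right)\neq I$; pure shifts arise only after multiplying by the already-realized block-diagonal elements. You correctly anticipated that this is the main obstacle, but as written the existence of the shift subgroup is asserted rather than proved, and the guiding picture you give for finding it would lead the search astray.
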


\begin{proof}
First of all, every automorphism of $G_4$ preserves the torsion subgroup, which forces to have a zero block in the $(2,1)$-position.

It has been computed in \cite{normPauli} that the Weyl group of the Pauli $\Z_n^2$-grading on $\slin_n(\F)$ is the group $\{A\in\Mat_{2\times2}(\Z_n)\mid \det(A)=\pm1 (\text{mod}\,n)\}$, isomorphic to two copies of $\SL_2(\Z_n)$, which in our case $n=3$ ($\slin_3(\F)\cong\der(\J)$)  coincides with the group $\GL_3(\Z_2)$.
Thus, the elements
$$\left( \begin{array}{cc}
A & 0\\
0 & I_2
\end{array} \right)$$
belong to $W(\Gamma_4)$ for all $A\in \SL_2(\Z_3)\cup \tiny{\left( \begin{array}{cc}
\bar1 & 0\\
0 & \bar2
\end{array} \right)} \SL_2(\Z_3)=\GL_3(\Z_2)$,
 since $\aut(\J)$ and $\aut(\der(\J))$ are isomorphic algebraic groups, so that we can extend any automorphism of $\J$ to an automorphism of $\T(\C,\J)\,(\cong\e_6)$ which commutes with $\T_2$.

Similarly, if an automorphism $\psi$ belonging to $\W(\Gamma_4)$ has matrix
$\left( \begin{array}{cc}
I_2 & 0\\
0 & B
\end{array} \right)$, then $\psi$ commutes with both $h_1$ and $h_2$, that is, $\psi$ belongs to the centralizer   $\Cent\span{h_1, h_2}\cong\langle h_1, h_2 \rangle \times \aut(\C)$ (\cite{e6}). Hence, certain $\psi h_1^{n_1}h_2^{n_2}\in\aut(\C)$ and, so, it is in the normalizer of $\T_2$. Thus, $B$ belongs to the Weyl group of the $\Z^2$-grading on $\C$, that is isomorphic to Weyl group of  the Cartan grading on $\g_2$, and consequently it is isomorphic to the dihedral group $D_6$.
To concrete the representation in terms of our fixed basis of $G_4$, recall from \cite{EK11}
that this Weyl group is generated by the classes of the following  automorphisms of $\C$:
$$
\begin{array}{ll}
 \rho\colon  \C \rightarrow \C,&\quad e_j \mapsto e_j, u_{i}\mapsto u_{i+1},v_{i} \mapsto v_{i+1},\\
 \psi_1\colon  \C \rightarrow \C,&\quad e_1 \leftrightarrow e_2,u_i \leftrightarrow v_i,\\
 \psi_2\colon  \C \rightarrow \C,&\quad e_j \mapsto e_j,u_1 \mapsto -u_1, u_2 \leftrightarrow u_3, v_1 \mapsto -v_1, v_2 \leftrightarrow v_3,
\end{array}
$$
where $i=1,2,3$ (mod 3) and $j=1,2$.
  Their extensions  to $\e_6$ belong to $\W(\Gamma_4)$ and have related matrices, respectively,
$$\small{\left( \begin{array}{ccc}
I_2 & 0 & 0\\
0 & -1 & 1\\
0 & -1 & 0
\end{array} \right), \quad
\left( \begin{array}{ccc}
I_2 & 0 & 0\\
0 & -1 & 0\\
0 & 0 & -1
\end{array} \right), \quad
\left( \begin{array}{ccc}
I_2 & 0 & 0\\
0 & 1 & -1\\
0 & 0 & -1
\end{array} \right)}.
$$

We need some knowledge about the various order $3$ elements in the torus of $P_4$. Let us denote by $t_{a,b}\in\T_2$ the automorphism of $\T(\C,\J) $ which extends the automorphism of $\C$
whose action on the canonical basis of $\C$ (see Subsection~\ref{subsec_involvedstructures}) is diagonal with scalars
$$
\{1,1,a,b,\frac1{ab},\frac1a,\frac1b,ab\}.
$$
Its extension to $\der(\C)$ is also diagonal in the basis of $\der(\C)$ given by
$
\{D_{u_1,v_1},D_{u_2,v_2},D_{e_1,u_1},$
$D_{u_2,e_1},D_{e_1,u_3},D_{e_1,v_1},D_{e_1,v_2},D_{e_1,v_3}
D_{u_1,v_2},D_{u_1,v_3},D_{u_2,v_1},D_{u_2,v_3},D_{u_3,v_1},D_{u_3,v_2}\},
$
with respective eigenvalues
$$
\{1,1,a,b,\frac1{ab},\frac1a,\frac1b,ab,\frac ab,a^2b,\frac ba,ab^2,\frac1{a^2b},\frac1{ab^2}\}.
$$
This list allows us to distinguish the conjugacy classes of the elements in the torus by the dimension of its fixed part. Thus, if we split the elements of order $3$ in $\T_2$   as $\T_2^1\cup\T_2^2$ for $\T_2^1=\{t_{\omega,\omega},t_{\omega^2,\omega^2}\}$  and $\T_2^2=\{t_{a,b}\mid a^3=b^3=1, a\ne b\}\setminus\{\id\}$, then the elements in $\T_2^1$ fix subalgebras of dimension $24$, so that they are order 3 automorphisms of type $3C$
 and 
the elements in $\T_2^2$ belong to the class $3B$. We leave it to the reader to verify that $h_it$ is of type $3D$ if $t\in \T_2^1$
and
of type $3C$ if $t\in \T_2^2$, for all $i=1,2$. This has an immediate consequence on the shape of the possible elements in the Weyl group. If
$$
\left(\begin{array}{c|c}I_2& \begin{array}{cc} a&b\\c&d \end{array} \\ \hline 0&I_2 \end{array}\right)\in \W(\Gamma_4),
$$
this  forces $h_1$ to be conjugated to $h_1t_{\omega^{2a},\omega^{2b}}$, and
  $h_2$ to be conjugated to $h_2t_{\omega^{2c},\omega^{2d}}$, so that, by the arguments above, necessarily $a=b$ and $c=d$ (modulo 3).

In order to finish the proof, we have to show that every element in the right side of   Equation~(\ref{eq_elgrupodeweyldelostreses})
belong to the Weyl group, but it is sufficient to find one automorphism
  $\psi\in\aut(\Gamma_4)$ with related matrix $\small{\left(\begin{array}{c|c}I_2& \begin{array}{cc} \bar1&\bar1\\\bar0&\bar0 \end{array} \\ \hline 0&B \end{array}\right)}$ for some $B\in\span{\sigma,\tau }$.

Within our search, we are going to change the viewpoint recurring to a different way of realizing the grading, as described in Subsection~\ref{subsec_modeloAdams}. The advantage is a greater disposability of automorphisms.
Take as
$\psi=\Psi    \left(\begin{array}{ccc}
0 & 1 & 0\\
0 & 0 & 1\\
1 & 0 & 0
\end{array}\right) \in \Psi(\GL_3(\F))\le\aut(\L)$.
This automorphism $\psi$ commutes with $H_1$ and $H_2$ by construction, and verifies
$\psi T_{\alpha,\beta}\psi^{-1}=T_{\beta,\frac1{\alpha\beta}}$, in particular $\psi\in\norm(\mathcal{Q}_2)$ and then it can be projected on our Weyl group. But this cannot be done without making completely precise the identification:
In spite that $P_4$ and $\mathcal{Q}_2$ are conjugated, the conjugating automorphism does not necessarily apply $h_i$ into $H_i$ and $t_{a,b}$ into $T_{a,b}$, although it always applies $\T_2$ into $\{ T_{a,b}\mid a,b\in\F^\times\}$ (the only two-dimensional torus contained in $P_4$). First note that (recall $\xi^9=1$)  the subgroup $\span{H_1 T_{\xi, \xi},H_2}$($\cong\Z_3^2$) has every order three element of type $3D$, as proved in \cite[Lemma~11]{e6}. Besides the elements of type $3B$ in the accompanying torus are $T_{\omega,1}$ and its square. To avoid the inconvenience of the kernel of $\Psi$, consider the following isomorphism of two-dimensional algebraic torus
$$
\begin{array}{rcl}
(\F^\times)^2/\span{(\omega,\omega)}&\to&(\F^\times)^2\\
\overline{(\alpha,\beta)}&\mapsto& (\alpha\beta^2,\alpha/\beta).
\end{array}
$$
Thus consider $T_{\alpha,\beta}=:T'_{\alpha\beta^2,\alpha/\beta}$, $H_1':=H_1 T_{\xi, \xi}$ and $H_2':=H_2$.
With this notation the order three elements in the torus of type $3C$ are $\{T'_{\omega,\omega},T'_{\omega^2,\omega^2}\}$, and the other ones are of type $3B$. Now we will see that
there is an isomorphism $\Upsilon\colon \L\to\T(\C,\J)$ such that $P_4=\Upsilon \mathcal{Q}_2\Upsilon^{-1}$,
$h_i=\Upsilon H_i'\Upsilon^{-1}$ and $t_{\alpha,\beta}=\Upsilon T'_{\alpha,\beta}\Upsilon^{-1}$ if $\alpha^3=\beta^3=1$. As obviously the equations
  $$
  \begin{array}{l}
  \psi T'_{\alpha,\beta}\psi^{-1}=T'_{ \frac1{\alpha\beta},\alpha},\\
  \psi H_1'\psi^{-1}=H'_1T'_{\omega,\omega},\\
  \psi H_2'\psi^{-1}=H'_2,
  \end{array}
  $$
  hold, then $\Upsilon \psi\Upsilon^{-1}\in\norm(P_4)$ has as related matrix
  $$
  \left(\begin{array}{c|c}I_2& \begin{array}{cc} \bar1&\bar1\\\bar0&\bar0 \end{array} \\ \hline 0&  \begin{array}{cc} -1 & 1\\ -1 & 0\end{array}
  \end{array}\right),
  $$
as searched.

The existence of $\Upsilon$ is justified by the following argument: the elements of type $3D$ in $\mathcal{Q}_2$ are
just $\{{H'_1}^i{H'_2}^j{T'_{\omega,\omega}}^k\mid i,j,k\in\{0,1,2\},(i,j)\ne(0,0)\}$. An automorphism passing from $P_4$ to $\mathcal{Q}_2$ necessarily applies $h_1$ into one of these elements. By using previous elements in $\W(\Gamma_4)$ (any nonidentity element in $\span{H_1',H_2'}$ can be moved into $H_1'$ by an element in the normalizer),  we can replace the above automorphism with another one applying $h_1$ into $H'_1{T'_{\omega,\omega}}^j$ for some $j\in\{0,1,2\}$, and then by composing it with $\psi^{-j}$, with a third one which applies $h_1$ into $H'_1$.
\end{proof}


\subsection{Weyl group of the $\mathbb{Z}_2^5\times\mathbb{Z}$-grading}\label{subsec_WeylZZ2quinta}

Take $\Gamma_5$ the $G_5=\mathbb{Z}_2^5\times\mathbb{Z}$-grading described in Subsection~\ref{subsec_gradZZ2quinta}, produced by $P_5=\span{\theta, \widetilde{f}_{12}, \widetilde{f}_{13}, \widetilde{f}_{14}, \widetilde{f}_{15}}\cdot T_1$. Take the canonical basis of $G_5$
in order to identify the elements of $\W(\Gamma_5)\le\aut(G_5)$ with their matrices relative to it, written by blocks $1+4+1$.
\begin{prop}\label{prop_el5}
The Weyl group $\W(\Gamma_5)$ coincides with the set
$$
\left\{\left(
\begin{array}{c|c|c}
\bar1& \begin{array}{cccc}a&b&c&d\end{array}  &e\\
\hline
0&A& \kappa(A) \\
\hline
0& 0 & \pm1
\end{array}\right) \mid A\in \Sp_4(\mathbb{Z}_2),\,a,b,c,d,e\in\mathbb{Z}_2 \right\} ,
$$
where $\Sp_4(\mathbb{Z}_2)=\{A\in\Mat_{4\times4}(\Z_2)\mid ACA^t=C\}$, for
$C=\small{\left(
\begin{array}{cccc}
 \bar0 & \bar1 & \bar1 & \bar1 \\
 \bar1 & \bar0 & \bar1 & \bar1 \\
 \bar1 & \bar1 & \bar0 & \bar1 \\
 \bar1 & \bar1 & \bar1 & \bar0
\end{array}
\right)}$,
$$
\kappa_0(a_1,a_2,a_3,a_4)=
\begin{cases}\bar0\, \text{ if   $\  \vert\{i\mid a_i=\bar1\}\vert=1,2$},\\
\bar1\, \text{ if    $\ \vert\{i\mid a_i=\bar1\}\vert=3,4$},
\end{cases}
$$
and $\kappa(A)=\kappa((a_{ij}))=\left(\begin{array}{c}
\kappa_0(a_{11},a_{12},a_{13},a_{14})\\
\kappa_0(a_{21},a_{22},a_{23},a_{24})\\
\kappa_0(a_{31},a_{32},a_{33},a_{34})\\
\kappa_0(a_{41},a_{42},a_{43},a_{44})
 \end{array}\right)$ .
Therefore,
$$
 \W(\Gamma_5) \cong  (\Sp_4(\mathbb{Z}_2)\times\mathbb{Z}_2) \ltimes \mathbb{Z}_2^5.
$$
\end{prop}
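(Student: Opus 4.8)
The plan is to determine the matrices $\alpha_f$ attached to $f\in\aut(\Gamma_5)$ by a sequence of shape constraints, and then to realize every admissible matrix explicitly. First I would record the forced zeros. Since the torsion subgroup $\Z_2^5\le G_5$ is characteristic, each $\alpha_f$ preserves it and acts on the free quotient $G_5/\Z_2^5\cong\Z$ by $\pm1$, giving the last row $(0,0,0,0,0,\pm1)$. The inner/outer dichotomy in $\aut(\e_6)$ is a conjugation invariant, and in our basis it is read off precisely from the first coordinate (the one dual to $\theta$); hence $\alpha_f$ fixes $g_0$ and the first column is $(\bar1,0,0,0,0,0)^t$. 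Thus $\alpha_f$ is block upper triangular with an unknown $4\times4$ block $A$ over $\Z_2$, an unknown last column, and $\pm1$ in the corner.

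Next I would prove $A\in\Sp_4(\Z_2)$ and that it is attained. As $\alpha_f$ acts on $\X(T_1)$ by $\pm1$, the automorphism $f$ either centralizes or inverts $T_1$. In the centralizing case $f$ lies in $\span{\widetilde\varphi\mid\varphi\in\GL(V)}$ or in that set times $\theta$, by the description of $\Cent(T_1)$ in Subsection~\ref{subsec_modelo5grad}. Since $f$ normalizes $P_5$, it normalizes the group $F$ generated by the $\widetilde f_{1j}$ together with $f_{-1}=\widetilde{(-I_6)}$; the common eigenlines of $F$ are the coordinate lines $\F e_i$ (item (iv)), so $\varphi$ must be monomial, and since $f_{ij}^t f_{ij}=I_6$ the factor $\theta$ centralizes each $\widetilde f_{ij}$, whence the induced action on $F$ is through the permutation part $\sigma\in S_6$ alone. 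Identifying $F$ with the even-weight subspace of $\Z_2^6$ modulo the all-ones vector, one checks that the Gram matrix of the standard dot product on the generators $\{1,2\},\{1,3\},\{1,4\},\{1,5\}$ is exactly $C$, so $S_6$ acts by isometries, giving $S_6\to\Sp_4(\Z_2)$; this is an isomorphism (both have order $720$ and the action on the $15$ nonzero classes is faithful), so $A$ exhausts $\Sp_4(\Z_2)$. For the inverting case I would use the grading reversal $\eta$ constructed below, which fixes $F$ pointwise: every inverting $f$ is $\eta$ times a centralizing one, so the same conclusion holds.

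The crux is the forced value $\kappa(A)$ of the middle part of the last column, where the arithmetic of $f_{-1}=\widetilde{(-I_6)}$ enters. Duality gives $(-1)^{\kappa_j}=g_5\bigl(f^{-1}\widetilde f_{1,j+1}f\bigr)$, so $\kappa_j$ records the $T_1$-component appearing when the conjugated sign matrix is rewritten in the standard generators. If $f$ realizes $\sigma$, that conjugate is again a weight-two sign matrix whose class in $F$ is the $j$-th row $(a_1,a_2,a_3,a_4)$ of $A$; on the other hand the standard product $\widetilde f_{12}^{\,a_1}\widetilde f_{13}^{\,a_2}\widetilde f_{14}^{\,a_3}\widetilde f_{15}^{\,a_4}$ is the diagonal sign matrix of weight $w+[\,w\text{ odd}\,]$ with $w=\#\{i\mid a_i=\bar1\}$. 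Using $\widetilde f_{12}\widetilde f_{34}\widetilde f_{56}=f_{-1}$ (item (iii)), a weight-four diagonal sign matrix equals its weight-two partner times $f_{-1}$, while a weight-two one needs no correction; hence the factor $f_{-1}$ occurs exactly when $w\in\{3,4\}$, i.e. $\kappa_j=\kappa_0(a_1,a_2,a_3,a_4)$. I expect this bookkeeping of $-I_6$ to be the main obstacle, since it is the only point where the relation $-I_6\in T_1$ couples the inner $\Z_2^4$ to the toral direction.

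Finally I would realize the remaining freedom. The parameters $a,b,c,d$ come from the diagonal $\widetilde\varphi$ with $\varphi$ having $\ii$ in positions $1$ and $k+1$ and $1$ elsewhere: then $\varphi^t\varphi=f_{1,k+1}$, so $f^{-1}\theta f=\theta\,\widetilde f_{1,k+1}$, producing a single $\bar1$ in the top row and nothing else. The parameter $e$ comes from $\widetilde\varphi$ with $\varphi=\ii\,\diag(-1,1,1,1,1,1)$: now $\varphi^t\varphi=-I_6$, so $f^{-1}\theta f=\theta f_{-1}$ and $e=\bar1$ while $A=I$ and $a=b=c=d=\bar0$. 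For the corner sign I would build the grading reversal $\eta$ from the symmetric form on $V$ with Gram matrix $I_6$, extended to $L$ through the contractions of Subsection~\ref{subsec_modelo5grad} with suitable scalars; being orthogonal for this form, each $\widetilde f_{ij}$ is centralized by $\eta$, so $\eta$ inverts $T_1$, fixes $F$ pointwise, and lies in $\norm(P_5)$. Composing $\eta$ with the permutations $\widetilde{p_\sigma}$ and the diagonal automorphisms above, and cancelling the parasitic value $e=\mathrm{sgn}(\sigma)$ carried by odd permutations against the pure-$e$ element, yields every matrix in the displayed set. The free parameters $(a,b,c,d,e)$ then form a normal $\Z_2^5$ and the permutations together with $\eta$ furnish the complement $\Sp_4(\Z_2)\times\Z_2$, giving the stated semidirect decomposition.
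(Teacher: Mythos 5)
Your proposal is correct in substance and reaches the stated answer, but at the key step it is more self-contained than the paper. Where the paper obtains the middle block and corner sign $(A,\pm1)\in\Sp_4(\Z_2)\times\Z_2$ by restricting to $\fix\theta\cong\mathfrak{c}_4$ and citing \cite{EK12} for the Weyl group of the induced $\Z\times\Z_2^4$-grading there, you derive it directly: normalizing $P_5$ forces $\varphi$ to be monomial, the action on $\mathcal{D}/\span{-I_6}\cong\Z_2^4$ factors through $S_6$, and you identify the image with $\Sp_4(\Z_2)$ via the invariant alternating form (whose Gram matrix on the classes of $f_{12},f_{13},f_{14},f_{15}$ is exactly $C$) together with $|S_6|=|\Sp_4(\mathbb{Z}_2)|=720$ and faithfulness. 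The paper only verifies $X_\sigma CX_\sigma^t=C$ for transpositions and leans on \cite{EK12} for the group identification, so your argument buys independence from that reference at the cost of the classical isomorphism $S_6\cong\Sp_4(\mathbb{Z}_2)$. Your bookkeeping for the forced column $\kappa(A)$ — rewriting the conjugated weight-two sign matrix in the generators $\widetilde f_{1j}$ and extracting a correction $f_{-1}\in T_1$ exactly when the row weight is $3$ or $4$ — coincides with the paper's.

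Two points need repair or verification. First, a concrete slip in realizing the top-row parameters: for $\varphi=\diag(\ii,\ii,1,1,1,1)$ (your ``$\ii$ in positions $1$ and $k+1$'') one has $\varphi^t\varphi=f_{12}$ but $\prod_{i=1}^6\alpha_i=-1$, and, as the paper stresses, $\varphi^t\varphi\in\F I_6$ is necessary but \emph{not} sufficient; the actual relation is $\widetilde\varphi\,\theta\,\widetilde\varphi^{-1}=\theta\,\widetilde f_{12}\,f_{-1}$, as one sees on $e_1\wedge e_2\wedge e_3\in L_1$, where the commutator acts by $\ii^2=-1$ while $\widetilde f_{12}$ acts by $+1$. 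So your element carries the parasitic entry $e=\bar1$ besides $a=\bar1$, and the intermediate claim ``a single $\bar1$ in the top row and nothing else'' is false as written; the paper's choice $\diag(\ii,-\ii,1,1,1,1)$, which satisfies $\prod_i\alpha_i=1$, avoids this. Your conclusion survives only because you independently realize the pure-$e$ element (your $\ii\,\diag(-1,1,1,1,1,1)$ is exactly the paper's $\varphi_2$) and can cancel the parasite — ironically, this is precisely the $-I_6\in T_1$ coupling you flag as the main obstacle elsewhere. (The same caveat applies to your justification that $\theta$ centralizes each $\widetilde f_{ij}$ from $f_{ij}^tf_{ij}=I_6$ alone; the correct criterion, that $\frac1{\alpha_1}\varphi$ have order two and determinant one, does hold for $f_{ij}$, so that claim stands.) Second, your grading reversal $\eta$ carries more weight than the paper's analogous $\rho$: you use it not merely to realize the corner $-1$ but to reduce the $T_1$-inverting case of the $A$-classification to the centralizing one, so the assertion $\eta\in\norm(P_5)$ — in particular $\eta\theta\eta^{-1}\in P_5$, not only $\eta\widetilde f_{ij}\eta^{-1}=\widetilde f_{ij}$ and inversion of $T_1$ — requires an explicit check on $L_1+L_{-1}$ of the same kind as above. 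The paper's citation of \cite{EK12} silently guarantees the existence of inverting normalizing elements, whereas your architecture does not, so this verification is genuinely load-bearing in your version.
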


\begin{proof}
Since the action by conjugation of $\aut(\Gamma_5)$ on $P_5$ takes inner  automorphisms to inner   automorphisms,  any element in $\W(\Gamma_5)$ must have zero blocks in the positions $(2,1)$ and $(3,1)$.
Also, the zero block in the position $(3,2)$ is a consequence of the fact that the torsion group of $G_5$ is preserved.

The centralizer of $\theta$ is known to be isomorphic to $\Sp_8(\F)\cdot\span{1,\theta}\cong\Sp_8(\F)\times \Z_2$, what implies that every automorphism commuting with $\theta$ is an extension of some automorphism of $\mathfrak{sp}\,(8)\equiv\mathfrak{c}_4$ and conversely. That means that any automorphism $f$ such that
$\alpha_f=\left(\begin{array}{c|c|c}
\bar1& 0  &0\\
\hline
0&A& B \\
\hline
0& 0 & c
\end{array}\right)\in\W(\Gamma_5)$
verifies that $f\vert_{\fix\theta}$ belongs to the automorphism group of the $\mathbb{Z}\times\mathbb{Z}_2^4$-grading on $\mathfrak{c}_4$ produced by the restriction to $\fix\theta$ of the automorphisms   in $P_5 $. Its Weyl group has been proved in \cite[Theorem~5.7]{EK12} to be $\Sp_4(\mathbb{Z}_2)\times\mathbb{Z}_2$.
For completeness, we wish to get  a matricial expression in terms of our basis.  It is possible to take $c=-1$, considering for that purpose $\phi\colon\bigwedge^6V\to\F$ a fixed nonzero linear map, and then the automorphism of $L$ extending $\rho\colon\bigwedge^3V\to\bigwedge^3V^*$ given by $\span{x,\rho(y)}=\phi(y\wedge x)$.
Now assume that $f\in\norm(P_5)$ commutes with $\theta$ and with $T_1$ (so $c=1$). Hence there is $X\in\GL_6(\F)$ such that either $f$ or $f\theta$ equals $\widetilde X$.
Any inner order 2 element in $P_5$ is  $\widetilde Y$ for some  $Y\in \mathcal{D}=\{\diag(\varepsilon_1,\dots,\varepsilon_6)\mid \varepsilon_i^2=1,\Pi_{i=1}^6\varepsilon_i=1\}$.
As $Xf_{ij}X^{-1}\in \mathcal{D}$ for all $i,j$
(see item (ii) in Subsection~\ref{subsec_gradZZ2quinta}), this implies that $X$ is a permutation matrix (in each row and in each column, there is   only one nonzero entry). Thus there is $\sigma\in S_6$ such that $X=\diag(a_1,\dots,a_6)p_\sigma$ (recall that   $p_\sigma=(\delta_{\sigma(i)j})_{ij}$ for $\delta$ the Kronecker delta) and the action on $\mathcal{D}$ is given by
$$
X\diag(\varepsilon_1,\dots,\varepsilon_6) X^{-1}=\diag(\varepsilon_{\sigma(1)},\dots,\varepsilon_{\sigma(6)}).
$$
Note that, for each $Y=\diag(\varepsilon_1,\dots,\varepsilon_6)\in\mathcal{D}\setminus\{I_6\}$, the automorphism $\widetilde Y$   is of type $2A$ if the cardinal   $\vert\{i\in\{1,\dots,6\}\mid \varepsilon_i=-1\}\vert$ is either $2$ or $6$, and of type
$ 2B$ otherwise. All the extensions of these elements are obviously conjugated by suitable elements in $\norm(P_5)$, as well as all the elements of type $2A$ different from $f_{-1}=-\widetilde\id$. In particular, there is not a   subgroup of type $\Z_2^4$ which is  invariant for $\norm(P_5)$. Anyway, $S_6$ does act in the quotient $\mathcal{D}/\span{-\id}\cong\Z_2^4$ (the action $\sigma\cdot  \overline{Y}=\overline{p_\sigma Yp_\sigma^{-1}}$ is well defined) and it can be easily checked that for any cycle    $\sigma=(i,j)\in S_6$, the matrix $X_\sigma$ relative to the basis
$\{\overline{f}_{12}, \overline{f}_{13}, \overline{f}_{14}, \overline{f}_{15}\}$ verifies $X_\sigma CX^t_\sigma=C$.
The key point now is that the column matrix $B$ in $\alpha_f$ is completely determined by $A=X_\sigma$.
If the $i$th row of $A$ ($i=1,2,3,4$) is given by $(a,\,b,\,c,\,d)\in\Z_2^4$, then 
$f_{1,i+1}$ is sent to
$\diag((-1)^{a+b+c+d},(-1)^{a},(-1)^{b},(-1)^{c},(-1)^{d},1)$, which has four $-1$'s if and only if $(a,\,b,\,c,\,d)$ has three  or four $\bar1$'s.

Finally, we wish to know if every order two outer automorphism in $P_5 $ is conjugated to $\theta$ by an element in the normalizer, so that $a,b,c,d,e$ could take any value in $\{\bar0,\bar1\}$. The nontrivial point is to choose such an element inside
 the normalizer of the MAD-group, because of course we know that all these order two outer automorphisms are conjugated to $\theta$. Indeed, otherwise there would exist a fine $\mathbb{Z}\times\mathbb{Z}_2^4$-grading on $\f_4$, what does not happen according to \cite{f4}.

  The required automorphisms are easy to find by working in our model.
Note, if
$\varphi=\text{diag}(\alpha_{ 1},\alpha_{2 },\alpha_{ 3},\alpha_{ 4},\alpha_{ 5},\alpha_{ 6})\in\GL_6(\F)$, that
$\widetilde\varphi$ commutes with every inner automorphism of $P_5 $, although it does not commute necessarily with $\theta$. More precisely, $\widetilde\varphi\theta\widetilde\varphi^{-1}\theta^{-1}$ acts in $L_0$ as $\Ad(\varphi\varphi^t)$ and it acts in $e_{\sigma(1)}\wedge e_{\sigma(2)}\wedge e_{\sigma(3)}\in L_1$ with eigenvalue
$\frac{\alpha_{ \sigma(1)}\alpha_{\sigma(2) }\alpha_{ \sigma(3)}}{\alpha_{ \sigma(4)}\alpha_{\sigma(5)}\alpha_{ \sigma(6)}}$.
Thus,
the condition $\tilde\varphi\theta\tilde\varphi^{-1}=\theta \widetilde\psi$, for $\psi={\text{diag}(\beta_{ 1},\beta_{2 },\beta_{ 3},\beta_{ 4},\beta_{ 5},\beta_{ 6})}$ with $\beta_i\in\{\pm1\}$, is equivalent to  the conditions ${\alpha_i}^2= \beta_i$ for all $i$ and $\Pi_{i=1}^6\alpha_i=1$.
In particular, the automorphisms extending
 $ \varphi_1=\text{diag}(\ii,-\ii,1,1,1,1)$ and  $\varphi_2=\text{diag}(-\ii,\ii,\ii,\ii,\ii,\ii)$
 satisfy that
 $$
 \widetilde{\varphi}_1\theta{\widetilde{\varphi}_1}^{-1}=\theta \widetilde{f}_{12},\quad
 \widetilde{\varphi}_2\theta{\widetilde{\varphi}_2}^{-1}=\theta f_{-1},
 $$
 so that they belong to $\aut(\Gamma_5)$ with induced matrices in   $\W(\Gamma_5)$
$$
\begin{array}{l}
\left(
\begin{array}{c|c|c}
\bar 1& \begin{array}{cccc}\bar 1&\bar 0&\bar 0&\bar 0\end{array}  & 0 \\
\hline
0& I_4 &0 \\
\hline
0& 0 & 1
\end{array}\right),
\quad 
\left(
\begin{array}{c|c|c}
\bar 1& 0
& \bar 1 \\
\hline
0& I_4 &0 \\
\hline
0& 0 & 1
\end{array}\right).
\end{array}
$$
The proof is ended when we multiply for previous elements in the Weyl group.
\end{proof}


\subsection{Weyl group of the outer $\mathbb{Z}_2^3\times\mathbb{Z}^2$-grading}\label{subsec_WeylZcuadZ2cubo}

 For avoiding ambiguity, we must fix an isomorphism between the torus of $P_6$,
$\{ \tau_{\alpha,\beta}^\diamondsuit\mid \alpha,\beta\in\F^\times\} $, and $ (\F^\times)^2$.
If we notice that $\tau_{-1,-1}=\id$, and that the following is an isomorphism of two-dimensional algebraic torus
$$
\begin{array}{rcl}
(\F^\times)^2/\span{(-1,-1)}&\to&(\F^\times)^2\\
\overline{(\alpha,\beta)}&\mapsto& (\alpha\beta,\alpha/\beta),
\end{array}
$$
  hence, the automorphisms $\tau'_{\alpha\beta,\alpha/\beta }:=\tau_{\alpha,\beta}$ are more convenient to work with the grading produced by
$P_6
$, according to Equation~(\ref{eq_torosconisomorfismoprefijado}).
Denote by $\Gamma_6$ the $G_6=\Z_2\times\mathbb{Z}_2^2\times\mathbb{Z}^2$-grading induced by $P_6=\langle \Theta, {g_1}^\diamondsuit,  {g_2}^\diamondsuit \rangle \cdot \{{\tau'_{\alpha,\beta}}^\diamondsuit \mid \alpha,\beta\in\F^\times\}\le\aut(\mathbf{L})$.
Take as always the  {canonical} basis of $G_6$.

\begin{prop}
The Weyl group $\W(\Gamma_6)$ is
\begin{equation}\label{eq_ultimoweyl}
\left\{\left(
\begin{array}{c|c|c}
\bar1& a \quad b & c \quad d \\
\hline
\begin{array}{c}\bar0\\\bar0\end{array} & A & \begin{array}{c} e \quad e \\ f \quad f \end{array} \\
\hline
0 & 0 & B
\end{array}\right) \mid
  a,b,c,d,e,f\in\mathbb{Z}_2,   A\in\GL_2(\mathbb{Z}_2), B\in  \span{ \sigma_1,\sigma_2,-\id }\right\}
 \end{equation}
 where the set $\{\sigma_1,\sigma_2,-\id\}$
generates a group isomorphic to $\mathbb{Z}_2^2 \rtimes \mathbb{Z}_2.$
 Hence,
 $$\W(\Gamma_6)
 \cong \mathbb{Z}_2^4 \rtimes ((\mathbb{Z}_2^2 \rtimes S_3) \times (\mathbb{Z}_2^2 \rtimes \mathbb{Z}_2)).
 $$
\end{prop}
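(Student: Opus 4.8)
The plan is to compute $\W(\Gamma_6)$ in two stages: first constrain the shape that every matrix $\alpha_f$ must have, then realize every admissible matrix by an explicit element of $\norm(P_6)$. As in the previous propositions I read the matrix so that the $i$th row records the expansion of $f^{-1}(\mathrm{gen}_i)f$ in the generators $\Theta,g_1^\diamondsuit,g_2^\diamondsuit$ and the torus $\{\tau'\}$ of $P_6$. The first observation is that $\{f^\diamondsuit\mid f\in\Sp_8(\F)\}$ is a connected subgroup of $\aut(\mathbf{L})$, hence consists of inner automorphisms, whereas $\Theta$ is outer of type $2D$ (it fixes $C_4$). Since conjugation preserves $\aut(\e_6)/\Int(\e_6)\cong\Z_2$, the conjugates of $g_1^\diamondsuit,g_2^\diamondsuit$ and of the torus carry no $\Theta$-component, which forces the first column of $\alpha_f$ to be $(\bar1,\bar0,\bar0,0,0)^t$. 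A continuity argument (a one-parameter subgroup through the identity must stay inside the torus) then kills the $\Theta$- and $g$-parts of the images of $\tau'$, giving the two lower-left zero blocks and confining the torus action to a block $B\in\GL_2(\Z)$, while the $g$-part must be an automorphism $A\in\GL_2(\Z_2)$ of $\langle g_1^\diamondsuit,g_2^\diamondsuit\rangle\cong\Z_2^2$.

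The two remaining structural constraints — that $B$ lies in $\langle\sigma_1,\sigma_2,-\id\rangle$ and that the torus-contribution to the image of each $g_i^\diamondsuit$ is ``diagonal'' (the $e\,e;f\,f$ pattern) — I would extract by restricting $\Gamma_6$ to $\fix\Theta\cong C_4=\sp_8$. The automorphisms $f$ fixing $\Theta$ lie in $\Cent(\Theta)=\Sp_8(\F)\times\langle\Theta\rangle$, restrict to $\sp_8$ and normalize the $C_4$-MAD-group $\langle g_1,g_2\rangle\cdot\{\tau_{\alpha,\beta}\}$; hence this part of $\W(\Gamma_6)$ is exactly the Weyl group of the induced fine $\Z_2^2\times\Z^2$-grading on $C_4$, which can be read off by the methods of \cite{EK12}. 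Computing the eigenvalues of the order-two torus elements $\tau'_{\pm1,\pm1}$ on $\e_6$ and comparing fixed-subalgebra dimensions pins down their types ($2A$ versus $2B$); the requirement that $\alpha_f$ permute equal-type involutions then forces $B\in\langle\sigma_1,\sigma_2,-\id\rangle$ and the $e\,e;f\,f$ pattern, exactly as the analogous type-bookkeeping produced the $a\,a;b\,b$ pattern in $\W(\Gamma_4)$.

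For sufficiency I would exhibit generators. The block $A\in\GL_2(\Z_2)$ and the torus reflections $\sigma_1,\sigma_2,-\id$, together with the entries $e,f$, arise inside $\Sp_8(\F)$: suitable monomial symplectic matrices swap the two $2\times2$ diagonal blocks of $\tau_{\alpha,\beta}$, invert one of them, and mix $g_1,g_2$, and their $\diamondsuit$-extensions commute with $\Theta$ and normalize $P_6$. The genuinely new feature, and the main obstacle, is the first row: one must conjugate $\Theta$ to $\Theta\,g_1^{a}g_2^{b}\,\tau'^{(c,d)}$ for all $a,b,c,d\in\Z_2$. As the paper already warns, this is impossible inside the $\Sp_8$-model, where every visible mover of $P_6$ commutes with $\Theta$; so I would pass to the $5$-grading realization $P_6'=\langle\theta,\widetilde g_1',\widetilde g_2'\rangle\cdot T_1\cdot T_1'$ of Subsection~\ref{subsec_gradZoutercuadZ2cubo}. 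There, exactly as in Proposition~\ref{prop_el5}, a diagonal $\varphi=\diag(\alpha_1,\dots,\alpha_6)$ with $\prod\alpha_i=1$ satisfies $\widetilde\varphi\,\theta\,\widetilde\varphi^{-1}=\theta\,\widetilde\psi$ with $\psi=\diag(\alpha_1^2,\dots,\alpha_6^2)$; choosing $\alpha_i\in\{\pm1,\pm\ii\}$ so that $\psi$ equals in turn $g_1'$, $g_2'$ and the order-two elements of $T_1\cdot T_1'$ realizes $\theta\mapsto\theta\widetilde g_1'$, $\theta\mapsto\theta\widetilde g_2'$ and $\theta\mapsto\theta\,t$, i.e.\ all four off-diagonal parameters. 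Combining these $\theta$-movers with the $C_4$-part yields every matrix of the stated form, and a direct multiplication check shows that the $\{a,b,c,d\}$-elements constitute a normal $\Z_2^4$ acted on by $(\GL_2(\Z_2)\ltimes\langle e,f\rangle)\times\langle\sigma_1,\sigma_2,-\id\rangle\cong(\Z_2^2\rtimes S_3)\times(\Z_2^2\rtimes\Z_2)$, giving the asserted decomposition. The principal difficulties I anticipate are the fixed-point dimension computations separating the involution types (needed both for the $B$- and $e\,e;f\,f$-constraints and for the conjugacy of $\Theta$ with each $\Theta g_1^{a}g_2^{b}\tau'^{(c,d)}$), and the careful matching of $P_6$ with $P_6'$ so that the explicit $\theta$-conjugations descend to the claimed entries in the fixed basis of $G_6$.
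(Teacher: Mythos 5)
Your proposal is correct and follows essentially the same route as the paper's proof: the inner/outer and torsion constraints give the block shape, the restriction to $\fix\Theta\cong\mathfrak{c}_4$ together with \cite[Theorem~5.7]{EK12} and the $2A$/$2B$ involution-type bookkeeping pin down the $A$-, $B$- and $e\,e;f\,f$-blocks, and the passage to the $5$-grading realization $P'_6$ with diagonal conjugators $\widetilde{\varphi}$ satisfying $\widetilde{\varphi}\theta\widetilde{\varphi}^{-1}=\theta\widetilde{\psi}$ supplies the first-row movers of $\theta$, exactly as the paper does with $\varphi_0$ and $\varphi_1$. The only cosmetic difference is that you propose realizing $\theta\mapsto\theta\widetilde{g}'_2$ and all torus involutions directly, where the paper produces just two such conjugations and generates the rest by multiplying with previously found Weyl group elements.
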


\begin{proof}
First, any element in  $\W(\Gamma_6)$ has the first column as in Equation~(\ref{eq_ultimoweyl}), since the action by conjugation maps inner   automorphisms to inner   automorphisms. Also, there is a zero block in position $(3,2)$, since the torus is invariant.

Second, if an automorphism $f\in\aut(\Gamma_6)$ has as related matrix
\begin{equation}\label{eq_laresttricccionultimocaso}\left(\begin{array}{c|c|c}
\bar1& 0  &0\\
\hline
0&A& D \\
\hline
0& 0 & B
\end{array}\right)\in\W(\Gamma_6),
\end{equation}
then, as in the proof of Proposition~\ref{prop_el5}, the restriction $f\vert_{\fix\Theta}$ belongs to the group of automorphisms of the $\mathbb{Z}_2^2\times\mathbb{Z}^2$-fine grading on $\fix\Theta\cong\mathfrak{c}_4$, and conversely.
Such Weyl group is, following  \cite[Theorem~5.7]{EK12}, isomorphic to   $(\mathbb{Z}_2^2 \rtimes \Sp_2(\Z_2)) \times (\mathbb{Z}_2^2 \rtimes \mathbb{Z}_2)$. We will provide matricial expressions in terms of our basis.
Take the elements in $\Sp_8(\F)$,
$$
p_1=\left(\begin{array}{c|c}
0&I_4\\
\hline
I_4&0
\end{array}\right),
\qquad
p_2=\left(\begin{array}{c|c}
\begin{array}{cc}
0&I_2\\I_2&0
\end{array}&0\\
\hline
0&I_4
\end{array}\right),
\qquad
p_3=\left(\begin{array}{c|c}
I_4&0\\
\hline
0&\begin{array}{cc}
0&I_2\\I_2&0
\end{array}
\end{array}\right).
$$
It holds
$$
p_1\tau'_{\alpha,\beta}p_1^{-1}=\tau_{\alpha,\frac1\beta },\quad
p_2\tau'_{\alpha,\beta}p_2^{-1}=\tau_{\frac1\beta,\frac1\alpha},\quad
p_3\tau'_{\alpha,\beta}p_3^{-1}=\tau_{\beta,\alpha},
$$
so that $p_i^\diamondsuit$ is an automorphism of $\mathbf{L}$ with related matrix
$ \left(
\begin{array}{c|c|c}
\bar1& 0 & 0 \\
\hline
0 & I_2 & 0 \\
\hline
0 & 0 & *
\end{array}\right) $, and in the $(3,3)$-position
$\sigma_2$, $-\sigma_1$ and $\sigma_1$ respectively.
On the other hand, $\{\ii\sigma_1,-\ii\sigma_2,\sigma_3=\sigma_1\sigma_2\}$ are the Pauli matrices and are all of them interchangeable  in $\Mat_{2\times2}(\F)$. Thus, if $p\sigma_1p^{-1}=\sigma_2$, the element $\diag(p,p,p,p)^\diamondsuit$ commutes with $\tau_{\alpha,\beta}$ and with $ \Theta$, and it applies $g_1^\diamondsuit$ into $g_2^\diamondsuit$. In this way   $\left(\begin{array}{c|c|c}
\bar1& 0  &0\\
\hline
0&A& 0 \\
\hline
0& 0 &I_2
\end{array}\right)\in\W(\Gamma_6)$ for all $A\in\GL_2(\Z_2)$.
For the block $D$ in the $(2,3)$-position, we analyze the conjugacy class of the order two elements involved:
if we denote $g_3=g_1 g_2$, the elements of type    $2A$ are $\{g_i^\diamondsuit,(g_i\tau'_{-1,-1})^\diamondsuit,\tau'^\diamondsuit_{-1,1},\tau'^\diamondsuit_{1,-1}
\mid i=1,2,3\}$
and those of type $2B$ are $\{(g_i\tau'_{-1,1})^\diamondsuit,(g_i\tau'_{1,-1})^\diamondsuit,\tau'^\diamondsuit_{-1,-1} \mid i=1,2,3\}$. Hence the only possibility for $D$ is $\left( \begin{array}{cc} e & e \\ f & f \end{array} \right)$,
which effectively happens: note that the automorphism $\psi=
\left(\begin{array}{c|c|c}
I_4&0&0\\
\hline
0&\sigma_1&0\\
\hline
0&0&\sigma_1
\end{array}
\right)^\diamondsuit
$  commutes with   $\tau_{\alpha,\beta}$, $ \Theta$, and  $g_1^\diamondsuit$, and it verifies
$\psi g_2^\diamondsuit\psi^{-1}=(\tau_{1,-1}g_2)^\diamondsuit=(\tau'_{-1,-1}g_2)^\diamondsuit$.
We have, then, the required subgroup isomorphic to $(\mathbb{Z}_2^2 \rtimes \Sp_2(\Z_2)) \times (\mathbb{Z}_2^2 \rtimes \mathbb{Z}_2)$ and hence we must have found every element of the form (\ref{eq_laresttricccionultimocaso}).
\smallskip

Third, we are left with the task of  checking that   every order two outer automorphism in $P_6 $ is conjugated to $\Theta$ by an element in the normalizer.
It is enough to check that    every order two outer automorphism in $P'_6 $ is conjugated to $\theta$ by an element in the normalizer.
Set $\varphi_0=\text{diag}(1,1,1,\ii,1,-\ii)$ and $\varphi_1=\text{diag}(\ii,-\ii,1,1,1,1)$. By the arguments in the proof of Proposition~\ref{prop_el5}, both $\widetilde{\varphi}_0$ and $\widetilde{\varphi}_1$ commute with the elements $\widetilde{g}'_1$, $\widetilde{g}'_2$ and the elements of $T_1$ and $T'_1$, and also satisfy $\widetilde{\varphi}_0\theta = \theta \widetilde{g}'_1 \widetilde{\varphi}_0$ and $\widetilde{\varphi}_1\theta = \theta \widetilde{\psi}_{-1,0} \widetilde{\varphi}_1$. Therefore, the elements of $\W(\Gamma_6)$ induced by them are, respectively,
$$
\begin{array}{l}
\left(
\begin{array}{c|c|c}
\bar 1& \bar1 \quad \bar0 & \bar0 \quad \bar0 \\
\hline
0 & I_2 & 0 \\
\hline
0 & 0 & I_2
\end{array}\right),
\quad
\left(
\begin{array}{c|c|c}
\bar1& \bar0 \quad \bar0 & \bar1 \quad \bar0 \\
\hline
 0 & I_2 & 0 \\
\hline
0 & 0 & I_2
\end{array}\right),
\end{array}
$$
and it follows that the parameters $a,b,c,d\in\mathbb{Z}_2$ can take all possible values.  
\end{proof}

 \smallskip

 Summarizing the results of the paper,

\begin{theo}\label{th_maintheorem}
The Weyl groups of the fine gradings  up to equivalence  different from the Cartan grading  on the Lie algebra of type $\e_6$ over an algebraically closed field of characteristic zero with infinite universal grading group  are:
\begin{itemize}
\item[i)] $\W(\Gamma_1)
\cong \Mat_{3\times2}(\Z_2) \rtimes (\GL_3(\mathbb{Z}_2) \times D_3)$, for $\Gamma_1$ the $\mathbb{Z}_2^3 \times\mathbb{Z}^2$-grading of type $(48, 1, 0, 7)$ described in Subsection~\ref{subsec_gradZcuadZ2cubo}.
\item[ii)] $\W(\Gamma_2)
\cong  \Z_2^4 \rtimes (\GL_3(\Z_2)\times\Z_2)$, for $\Gamma_2$ the $\mathbb{Z}_2^4 \times\mathbb{Z}$-grading
of type $(57,0,7 )$ described in Subsection~\ref{subsec_gradZZ2cuarta}.
\item[iii)] $\W(\Gamma_3)  \cong\Z_2^2\rtimes( (\Z_2^3\rtimes S_4)\rtimes S_3 )$, for $\Gamma_3$ the $\mathbb{Z}_2 \times\mathbb{Z}^4$-grading of type $(72, 1, 0, 1)$ described in Subsection~\ref{subsec_gradZcuartaZ2}.
\item[iv)] $\W(\Gamma_4)
\cong  \Z_3^2 \rtimes (\GL_2(\mathbb{Z}_3) \times D_6)$, for $\Gamma_4$ the $\mathbb{Z}_3^2\times\mathbb{Z}^2$-grading of type $(60,9 )$ described in Subsection~\ref{subsec_gradZcuadZ3cuad}.
\item[v)] $\W(\Gamma_5) \cong  (\Sp_4(\mathbb{Z}_2)\times\mathbb{Z}_2) \ltimes \mathbb{Z}_2^5$,
for $\Gamma_5$ the $\mathbb{Z}_2^5\times\mathbb{Z}$-grading of type $( 73,0,0,0,1)$ described in Subsection~\ref{subsec_gradZZ2quinta}.
\item[vi)] $\W(\Gamma_6)\cong  \mathbb{Z}_2^4 \rtimes ((\mathbb{Z}_2^2 \rtimes S_3) \times (\mathbb{Z}_2^2 \rtimes \mathbb{Z}_2))$, for $\Gamma_6$ the $\mathbb{Z}_2^3\times\mathbb{Z}^2$-grading of type $(60,7,0,1 )$ described in Subsection~\ref{subsec_gradZoutercuadZ2cubo}.
\end{itemize}
\end{theo}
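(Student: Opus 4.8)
The plan is to observe that Theorem~\ref{th_maintheorem} is a summary statement: each of its six items has already been established in the corresponding Proposition of Subsections~\ref{subsec_WeylZ2cuboZcuad}--\ref{subsec_WeylZcuadZ2cubo}. Thus the first thing I would do is confirm that $\Gamma_1,\dots,\Gamma_6$ are exactly the fine gradings on $\e_6$ whose universal group is infinite and which differ from the Cartan grading. This is read off from the classification table of Section~\ref{sec_graduaciones4} (taken from \cite{e6}): among the seven entries carrying a free part, the one of type $(72,0,0,0,0,1)$ with group $\Z^6$ is the Cartan grading, and the remaining six have universal groups $\Z_2^3\times\Z^2$, $\Z_2^4\times\Z$, $\Z_2\times\Z^4$, $\Z_3^2\times\Z^2$, $\Z_2^5\times\Z$ and a second $\Z_2^3\times\Z^2$, with the types recorded in the statement. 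Since the pair (universal group, type) distinguishes the equivalence classes, matching these to $\Gamma_1,\dots,\Gamma_6$ is immediate, and the theorem follows by assembling the six propositions.

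For completeness I would recall the uniform mechanism behind those propositions, where the real content lies. Fixing the MAD-group $P_i=\Diag(\Gamma_i)$ that produces $\Gamma_i$ and its universal group $G_i=\X(P_i)$, one uses the identification $\W(\Gamma_i)=\aut(\Gamma_i)/\stab(\Gamma_i)\cong\norm(P_i)/\Cent(P_i)$ together with the fact that every $f\in\aut(\Gamma_i)$ induces an automorphism $\alpha_f$ of $G_i$; writing $\alpha_f$ as a matrix in the canonical basis (\ref{eq_basedelgrupo}) embeds $\W(\Gamma_i)$ into $\aut(G_i)$. Each proposition then pins down this matrix subgroup by a two-sided argument: membership of a list of generators is shown by exhibiting explicit automorphisms in $\norm(P_i)$ --- frequently after passing to whichever of the models of Section~\ref{sec_modelos3} (Tits, Elduque, Adams, the $5$-grading, the $\slin(2)\oplus\slin(6)$ and the $\sp_8$ models) makes them visible --- while the remaining matrices are excluded using structural constraints (the torsion subgroup of $G_i$ is preserved, inner automorphisms go to inner ones) and conjugacy invariants.

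The reusable tools I would emphasize are two. First, restriction to a regular subalgebra: for $\Gamma_2,\Gamma_3$ the subalgebra $\fix\varrho\cong\f_4$ carries a known fine grading whose Weyl group is computed in \cite{EK11}, and for $\Gamma_5,\Gamma_6$ the subalgebra $\fix\theta\cong\mathfrak{c}_4$ (respectively $\fix\Theta$) carries a grading whose Weyl group is given in \cite{EK12}; this yields the ``diagonal'' part of each Weyl group almost for free. Second, the classification of the order two and order three automorphisms of $\e_6$ by the type of their fixed subalgebra (the $2A$--$2D$ and $3B$--$3F$ data of Section~\ref{sec_graduaciones4}): computing $\dim\fix$ of a candidate product such as $\varrho F$ or $\varrho t$ certifies whether it is conjugate to the reference automorphism, hence whether a given off-diagonal matrix can occur. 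With both directions in hand, recognizing the resulting matrix groups as the stated iterated semidirect products $\Mat_{3\times2}(\Z_2)\rtimes(\GL_3(\Z_2)\times D_3)$, $\Z_2^4\rtimes(\GL_3(\Z_2)\times\Z_2)$, and so on, is routine bookkeeping.

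The main obstacle, in every case, is the interaction of these two directions at the ``mixing'' blocks --- the off-diagonal entries coupling the free part of $G_i$ with its torsion part, and the entry governing the outer class. Deciding exactly which such matrices belong to $\W(\Gamma_i)$ is delicate: one must simultaneously construct a concrete normalizing automorphism realizing each admissible block (the order-four maps $\Psi_i$ for $\Gamma_2,\Gamma_3$, the diagonal $\widetilde\varphi$ with $\varphi\varphi^t\in\F I_6$ for $\Gamma_5,\Gamma_6$, the change of model via $\Upsilon$ for $\Gamma_4$) and rule out the inadmissible ones by a fixed-subalgebra-dimension computation. This is precisely the step where a change of model, or the full conjugacy-class analysis of Section~\ref{sec_graduaciones4}, becomes indispensable, and it is the heart of each of the six proofs that Theorem~\ref{th_maintheorem} collects.
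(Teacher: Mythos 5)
Your proposal is correct and takes essentially the same approach as the paper: Theorem~\ref{th_maintheorem} is introduced there precisely as a summary, its proof being the assembly of the six propositions of Section~\ref{sec_gruposWeyl} together with the identification of $\Gamma_1,\dots,\Gamma_6$ as the six non-Cartan fine gradings with infinite universal group, read off (as you do) from the classification table of Section~\ref{sec_graduaciones4} using that the pair (universal group, type) determines the equivalence class. Your recap of the underlying mechanism --- the embedding $\W(\Gamma_i)\hookrightarrow\aut(G_i)$, the restrictions to $\fix\varrho\cong\f_4$ and $\fix\theta\cong\mathfrak{c}_4$ via \cite{EK11,EK12}, and the fixed-subalgebra conjugacy analysis for the mixing blocks --- faithfully matches where the paper's actual work lies.
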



\vspace{2.0 cm}

\end{document}